\documentclass[a4paper,11pt,reqno]{amsart}
\usepackage{amsmath,amsfonts,amsthm,amssymb,color}
\usepackage[T1]{fontenc}
\usepackage{pdfsync}
\usepackage{csquotes}
\usepackage{graphicx}
\usepackage{pstricks}
\usepackage{lmodern}
\usepackage[latin1]{inputenc}



\newcommand{\be}{\beta}

\newcommand{\id}{\mbox{Id}}

\newcommand{\1}{{\bf 1}}

\newcommand{\Dti}{\tilde{D}}
\newcommand{\tha}{\hat{t}}

\newcommand{\Xha}{\hat{X}}
\newcommand{\Xti}{\tilde{X}}

\newcommand{\cq}{\mathcal{Q}}
\newcommand{\bu}{\mathbf{U}}
\newcommand{\bv}{\mathbf{V}}
\newcommand{\by}{\mathbf{Y}}

\newcommand{\C}{\mathbb C}

\newcommand{\R}{\mathbb R}
\newcommand{\N}{\mathbb N}

\newcommand{\ca}{\mathcal A}
\newcommand{\cb}{\mathcal B}
\newcommand{\cac}{\mathcal C}

\newcommand{\cl}{\mathcal L}

\newcommand{\cn}{\mathcal N}

\newcommand{\cs}{\mathcal S}

\newcommand{\al}{\alpha}
\newcommand{\der}{\delta}

\newcommand{\ga}{\gamma}

\newcommand{\la}{\lambda}

\newcommand{\vp}{\varphi}


\newcommand{\lln}{\left|}
\newcommand{\rrn}{\right|}

\newtheorem{theorem}{Theorem}[section]

\newtheorem{corollary}[theorem]{Corollary}

\newtheorem{definition}[theorem]{Definition}

\newtheorem{lemma}[theorem]{Lemma}

\newtheorem{proposition}[theorem]{Proposition}

\theoremstyle{remark}
\newtheorem{remark}[theorem]{Remark}
\newtheorem{example}[theorem]{Example}

\date{\today}

\begin{document}

\makeatletter
\def\@settitle{\begin{center}%
  \baselineskip14\p@\relax
    \normalfont\LARGE
\@title
  \end{center}%
}
\makeatother

\title{On stochastic calculus with respect to $q$-Brownian motion}

\author{Aur\'elien Deya}
\address[A. Deya]{Institut Elie Cartan, University of Lorraine
B.P. 239, 54506 Vandoeuvre-l\`es-Nancy, Cedex
France}
\email{aurelien.deya@univ-lorraine.fr}

\author{Ren\'e Schott}
\address[R. Schott]{Institut Elie Cartan, University of Lorraine
B.P. 239, 54506 Vandoeuvre-l\`es-Nancy, Cedex
France}
\email{rene.schott@univ-lorraine.fr}

\keywords{non-commutative stochastic calculus; q-Brownian motion; rough paths theory}

\subjclass[2010]{46L53,60H05,60F17}

\begin{abstract}
Following the approach and the terminology introduced in [A. Deya and R. Schott, On the rough paths approach to non-commutative stochastic calculus, J. Funct. Anal., 2013], we construct a product L{\'e}vy area above the $q$-Brownian motion (for $q\in [0,1)$) and use this object to study differential equations driven by the process.\\
\indent We also provide a detailled comparison between the resulting \enquote{rough} integral and the stochastic \enquote{It{\^o}} integral exhibited by Donati-Martin in [C. Donati-Martin, Stochastic integration with respect to $q$ Brownian motion, Probab. Theory Related Fields, 2003].
\end{abstract} 

\maketitle

\section{Introduction: the $q$-Brownian motion}\label{sec:intro}

The $q$-Gaussian processes (for $q\in [0,1)$) stand for one of the most standard families of non-commutative random variables in the literature. Their consideration can be traced back to a paper by Frisch and Bourret in the early 1970s \cite{FB}: the dynamics is therein suggested as a model to quantify some possible {non-commutativity phenomenon} between the creation and annihilator operators on the Fock space, the limit case $q= 1$ morally corresponding to the classical probability framework. The mathematical construction and basic stochastic properties of the $q$-Gaussian processes were then investigated in the 1990s, in a series of pathbreaking papers by Bo{\.z}ejko, Kümmerer and Speicher \cite{q-gauss,BSp1,BSp2}.

\smallskip

For the sake of clarity, let us briefly recall the framework of this analysis and introduce a few notations that will be used in the sequel (we refer the reader to the comprehensive survey \cite{nicaspeicher} for more details on the subsequent definitions and assertions). First, recall that the processes under consideration consist of paths with values in a non-commutative probability space, that is a von Neumann algebra $\mathcal{A}$ equipped with a weakly continuous, positive and faithful trace $\vp$ . The sole existence of such a trace $\vp$ on $\ca$ (to be compared with the \enquote{expectation} in this setting) is known to give the algebra a specific structure, with \enquote{$L^p$}-norms
$$\|X\|_{L^p(\vp)}:= \vp( |X|^p)^{1/p} \quad  \ (\, |X|:=\sqrt{XX^\ast}\, )$$
closely related to the operator norm $\|.\|$: 
\begin{equation}\label{prop:norm-op}
\|X\|_{L^p(\vp)} \leq \|X\| \quad , \quad \|X\|=\lim_{p\to \infty} \|X\|_{L^p(\vp)} \ , \ \text{for all} \ X\in \ca \ .
\end{equation}
Now recall that non-commutative probability theory is built upon the following fundamental spectral result: any element $X$ in the subset $\ca_\ast$ of self-adjoint operators in $\ca$ can be associated with a law that shares the same moments. To be more specific, there exists a unique compactly supported
probability measure $\mu$ on $\R$ such that for any real polynomial $P$,
\begin{equation}\label{mu}
\int_{\R} P(x) \mathrm{d}\mu(x) = \vp(P(X))\ .
\end{equation}
Based on this property, elements in $\ca_\ast$ are usually referred to as (non-commutative) random variables, and in the same vein, the law of a given family $\{X^{(i)}\}_{i\in I}$ of random variables in $(\ca,\vp)$ is defined as the set of all of its joint moments 
$$\vp\big(X^{(i_1)} \cdots X^{(i_r)}\big) \ , \quad i_1,\ldots,i_r \in I \ , \ r\in \N \ .$$

\smallskip

With this stochastic approach in mind and using the terminology of \cite{q-gauss}, the definition of a $q$-Gaussian family can be introduced along the following combinatorial description:

\begin{definition}\label{d:crossing}
1. Let $r$ be an even integer. A \emph{pairing} of $\{1,\ldots,r\}$ is any partition of $\{1,\ldots,r\}$ into $r/2$ disjoint subsets, each of cardinality $2$. We denote by $\mathcal{P}_2(\{1,\ldots,r\})$ or $\mathcal{P}_2(r)$ the set of all pairings of $\{1,\ldots,r\}$.

2. When $\pi \in \mathcal{P}_2(\{1,\ldots,r\})$, a \emph{crossing} in $\pi$ is any set of the form $\{\{x_1,y_1\},\{x_2,y_2\}\}$ with $\{x_i,y_i\}\in \pi$ and $x_1 < x_2 <y_1 <y_2$. The number of such crossings is denoted by $\mathrm{Cr}(\pi)$. 
\end{definition}

\begin{definition}
For any fixed $q\in [0,1)$, we call a $q$-Gaussian family in a non-commutative probability space $(\ca,\vp)$ any
collection $\{X_i\}_{i \in I}$ of random variables in $(\ca,\vp)$ such that, for every integer $r\geq $1 and
all $i_1,\ldots,i_r \in I$, one has
\begin{equation}\label{form-q-gaussian}
\vp\big( X_{i_1}\cdots X_{i_r}\big)=\sum_{\pi \in \mathcal{P}_2(\{1,\ldots,r\})} q^{\mathrm{Cr}(\pi)} \prod_{\{p,q\}\in \pi} \vp\big( X_{i_p}X_{i_q}\big) \ .
\end{equation}
\end{definition}

\

Therefore, just as with classical (commutative) Gaussian families, the law of a $q$-Gaussian family $\{X_i\}_{i \in I}$ is completely characterized by the set of its covariances $\vp(X_iX_j)$, $i,j\in I$. In fact, when $q\to 1$ and $\vp$ is - at least morally - identified with the usual expectation, relation (\ref{form-q-gaussian}) is nothing but the classical Wick formula satisfied by the joint moments of Gaussian variables. 

\smallskip

When $q=0$, such a family of random variables is also called a semicircular family, in reference to its marginal distributions (see \cite[Chapter 8]{nicaspeicher} for more details on semicircular families, in connection with the so-called free central limit theorem).   

\smallskip

We are now in a position to introduce the family of processes at the core of our study:
\begin{definition}\label{defi:q-bm}
For any fixed $q\in [0,1)$, we call $q$-Brownian motion ($q$-Bm) in some non-commutative probability space $(\ca,\vp)$ any $q$-Gaussian family $\{X_t\}_{t\geq 0}$ in $(\ca,\vp)$ with covariance function given by the formula
\begin{equation}\label{cova-q-bm}
\vp\big( X_{s}X_{t}\big)=s\wedge t \ .
\end{equation}
\end{definition}

The existence of such a non-commutative process (in some non-commutative space $(\ca,\vp)$) has been established by Bo{\.z}ejko and Speicher in \cite{BSp1}. In the same spirit as above, the $q$-Bm distribution can be regarded as a straightforward extension of two well-known processes:

\smallskip

\noindent
$\bullet$ When $q\to 1$, one recovers the classical Brownian-motion dynamics, with independent, stationary and normally-distributed increments.

\smallskip

\noindent
$\bullet$ The $0$-Brownian motion coincides with the celebrated free Brownian motion, whose freely-independent increments are known to be closely related to the asymptotic behaviour of large random matrices, following Voiculescu's breakthrough results \cite{voiculescu}. 

\smallskip

Thus, we have here at our disposal a family of processes which, as far as distribution is concerned, provides a natural \enquote{smooth} interpolation between two of the most central objects in probability theory: the standard and the free Brownian motions. It is then natural to wonder whether the classical stochastic properties satisfied by each of these two processes can be \enquote{lifted} on the level of this interpolation, or in other words if the properties known for $q=0$ and $q\to 1$ can be extended to every $q\in [0,1)$. Of course, any such extension potentially offers an additional piece of evidence in favor of this interpolation model, as a privileged link between the free and the commutative worlds. 

\smallskip

Some first results in this direction, focusing on the stationarity property and the marginal-distribution issue, can be found in \cite{q-gauss}:

\begin{proposition}\label{prop:base-q-bm}
For any fixed $q\in [0,1)$, let $\{X_t\}_{t\geq 0}$ be a $q$-Brownian motion in some non-commutative probability space $(\ca,\vp)$. 
Then for all $0\leq s<t$, the random variable $X_t-X_s$ has the same law as $\sqrt{t-s}\, X_1$, in the sense of identity (\ref{mu}). In particular, any $q$-Brownian motion $\{X_t\}_{t\geq 0}$ is a $\frac12$-Hölder path in $\ca$, i.e.
\begin{equation}\label{q-bm-holder}
\sup_{s<t} \frac{\|X_t-X_s\|}{\lln t-s \rrn^{1/2}} \leq \|X_1\| \ < \ \infty\ .
\end{equation}
Moreover, the law $\mu_q$ of $X_1$ is absolutely continuous with respect to the Lebesgue measure; its density is supported on $\big[\frac{-2}{\sqrt{1-q}},\frac{2}{\sqrt{1-q}} \big]$ and is given, within this interval, by the formula
\[
\mu_q(\mathrm{d}x)= \frac{1}{\pi} \sqrt{1-q} \sin \theta \prod_{n=1}^\infty (1-q^n) |1-q^n e^{2i\theta}|^2\ , \quad \text{where} \ x=\frac{2\cos \theta}{\sqrt{1-q}} \,\,\mbox{ with } \theta \in [0,\pi]\ .
\]
\end{proposition}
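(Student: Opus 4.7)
The plan is to handle the three claims in increasing order of difficulty: scaling of the one-dimensional marginals, the Hölder bound, and finally the explicit density.

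For the first identity, I would compute the moments of $X_t-X_s$ directly from (\ref{form-q-gaussian}). By multilinearity in each slot, for every integer $r\geq 1$
\[
\vp\big((X_t-X_s)^r\big)=\sum_{\pi\in \mathcal{P}_2(r)} q^{\mathrm{Cr}(\pi)}\prod_{\{p,q\}\in\pi}\vp\big((X_t-X_s)^2\big),
\]
and using (\ref{cova-q-bm}) one checks $\vp((X_t-X_s)^2)=t-s$. The same computation applied to $\sqrt{t-s}\,X_1$ yields exactly the same expression, so all moments of the two self-adjoint elements coincide; since the measures $\mu$ in (\ref{mu}) are compactly supported, the moment problem is determinate and the two laws are equal. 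The Hölder estimate (\ref{q-bm-holder}) is then a direct corollary: equality of laws gives $\|X_t-X_s\|_{L^p(\vp)}=\sqrt{t-s}\,\|X_1\|_{L^p(\vp)}$ for every $p$, and letting $p\to\infty$ in (\ref{prop:norm-op}) yields $\|X_t-X_s\|=\sqrt{t-s}\,\|X_1\|$; finiteness of $\|X_1\|$ simply comes from $X_1\in\ca$.

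For the explicit form of $\mu_q$, I would rely on the concrete realization of $q$-Bm on the $q$-Fock space $\mathcal{F}_q(L^2(\R_+))$ constructed by Bo{\.z}ejko--Speicher. Writing $X_1=a(\1_{[0,1]})+a^\ast(\1_{[0,1]})$, where the creation/annihilation operators satisfy the $q$-commutation $a(f)a^\ast(g)-q\,a^\ast(g)a(f)=\langle f,g\rangle \id$, the trace $\vp$ restricted to the algebra generated by $X_1$ coincides with the vacuum state. The operator $X_1$ then has a tridiagonal Jacobi representation in the orthonormal basis $\{(a^\ast)^n\Omega/\sqrt{[n]_q!}\}_{n\ge 0}$, with recursion coefficients $\sqrt{[n]_q}$, so that the orthogonal polynomials for the spectral measure $\mu_q$ at $\Omega$ are precisely the continuous $q$-Hermite polynomials. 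The density formula then follows by identifying the orthogonality measure of these polynomials, either via the classical Szeg{\H o}-type computation or by inverting the Stieltjes transform obtained as a continued fraction from the Jacobi data; the substitution $x=2\cos\theta/\sqrt{1-q}$ is natural because the spectrum of $X_1$ is the support $[-2/\sqrt{1-q},2/\sqrt{1-q}]$ of the limit Jacobi matrix.

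The main obstacle is clearly this last step: translating the $q$-commutation relation into a genuine description of the spectral measure requires the full machinery of $q$-Hermite orthogonality (or equivalently the Jacobi/continued-fraction inversion) and is not formally derivable from the abstract covariance data alone. The first two parts, by contrast, rely only on the moment formula (\ref{form-q-gaussian}) and the norm identity (\ref{prop:norm-op}) and are essentially bookkeeping.
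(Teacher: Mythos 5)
The paper does not actually prove this proposition: it is quoted verbatim from Bo\.zejko--K\"ummerer--Speicher \cite{q-gauss}, so there is no internal proof to compare against. Judged on its own terms, your proposal is correct and follows the standard route. The first part is fine: expanding $(X_t-X_s)^r$ by multilinearity and resumming inside (\ref{form-q-gaussian}) is exactly the observation the paper itself records later as Lemma \ref{lem:stabi-q-gauss}, and since the laws in (\ref{mu}) are compactly supported, moment determinacy closes the argument. Your derivation of (\ref{q-bm-holder}) via $\|X_t-X_s\|_{L^p(\vp)}=\sqrt{t-s}\,\|X_1\|_{L^p(\vp)}$ and the limit in (\ref{prop:norm-op}) even gives equality rather than the stated inequality (for the even $p$ it suffices to note that $|X_t-X_s|^{2r}=(X_t-X_s)^{2r}$ by self-adjointness). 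For the density, your plan — pass to the $q$-Fock space realization, read off the Jacobi matrix $X_1e_n=\sqrt{[n+1]_q}\,e_{n+1}+\sqrt{[n]_q}\,e_{n-1}$ in the basis $e_n=(a^\ast)^n\Omega/\sqrt{[n]_q!}$, and identify the spectral measure at the vacuum as the orthogonality measure of the continuous $q$-Hermite polynomials — is precisely the argument of the cited reference, and you are right that it genuinely requires that external machinery. The only point worth making explicit is the bridge you leave implicit: the law of the abstract $X_1$ depends only on its moments, which are completely determined by (\ref{form-q-gaussian}) and (\ref{cova-q-bm}), so it coincides with the law of the canonical Fock-space realization, and computing there is legitimate. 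With that sentence added, your outline is a faithful reconstruction of the proof the paper delegates to \cite{q-gauss}.
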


\smallskip

A next natural step is to examine the possible extension, to all $q\in [0,1)$, of the \emph{stochastic integration} results associated with the free/classical Brownian motion. Let us here recall that the foundations of stochastic calculus with respect to the free Brownian motion (that is, for $q=0$) have been laid in a remarkable paper by Biane and Speicher \cite{biane-speicher}. Among other results, the latter study involves the construction of a free Itô integral, as well as an analysis of the free Wiener chaoses generated by the multiple integrals of the free Brownian motion. 

\smallskip

These lines of investigation have been followed by Donati-Martin in \cite{donati} to handle the general $q$-Bm case, with the construction of a $q$-Itô integral and a study of the $q$-Wiener chaos. Let us also mention the results of \cite{DNN} related to the extension of the fourth-moment phenomenon that prevails in Wiener chaoses. 

\

In this paper, we intend to go further with the analysis related to the $q$-Brownian motion. To be more specific, we propose, in the continuation of \cite{deya-schott}, to adapt some of the main \emph{rough-path principles} to this setting. The aim here is to derive a very robust integration theory allowing, in particular, to consider the study of differential equations driven by the $q$-Bm, i.e. sophisticated dynamics of the form
\begin{equation}\label{equa-diff-q-bm}
dY_t=f(Y_t) \cdot dX_t \cdot g(Y_t) \ , 
\end{equation}
for smooth functions $f,g$.

\smallskip

In fact, thanks to the general (non-commutative) rough-path results proved in \cite{deya-schott} (and which we will recall in Section \ref{sec:general-rp}), the objective essentially reduces to the exhibition of a so-called \emph{product Lévy area} above the $q$-Bm, that is a kind of iterated integral of the process involving the product structure of $\ca$. Let us briefly recall that the definition of such an object (which appears as quite natural in this algebra setting) has been introduced in \cite{deya-schott} as a way to overcome the possible non-existence problems arising from the study of more general Lévy areas, in the classical Lyons' sense \cite{lyons-book} (see \cite{vic-free} for a description of the non-existence issue in the free case).

\

At this point, we would like to draw the reader's attention to the fact that the construction in \cite{donati} of a $q$-Itô integral as an element of $L^2(\vp)$ would not be not sufficient for our purpose. Indeed, the rough-path techniques are based on Taylor-expansion procedures, which, for obvious stability reasons, forces us to consider an algebra norm in the computations. As a result, any satisfying notion of product Lévy area requires some control with respect to the operator norm, that is in $L^\infty(\vp)$ (along (\ref{prop:norm-op})), and not only with respect to the $L^2(\vp)$-norm (see Section \ref{sec:general-rp} and especially Definition \ref{defi:aire-gene} for more details on the topology involved in this control).

In the particular case of the free Bm ($q=0$), the Burkholder-Gundy inequality established by Biane and Speicher in \cite[Theorem 3.2.1]{biane-speicher} immediately gives rise to operator-norm controls on the free Itô integral, which we could readily exploit in \cite{deya-schott} to deal with rough paths in the free situation. Unfortunately, and at least for the time being, no similar operator-norm control has been shown for the $q$-Itô integral when $q\in (0,1)$. With our rough-path objectives in mind, we will be able to overcome this difficulty though, by resorting to a straightforward $L^\infty(\vp)$-construction of a product Lévy area - the latter object being actually much more specific than a general Itô integral. This is the purpose of the forthcoming Section \ref{sec:levy-area}, which leads to the main result of the paper. Injecting this construction into the general rough-path theory will immediately answer our original issue, that is the derivation of a robust stochastic calculus for the $q$-Bm.

\

It is then possible to compare, a posteriori, the resulting rough integral with more familiar $q$-Itô or $q$-Stratonovich integrals, through a standard $L^2(\vp)$-analysis and the involvement of the so-called second-quantization operator. This comparison will be the topic of Section \ref{sec:comparison}. Let us however insist, one more time, on the fact that this sole $L^2(\vp)$-analysis would not have been sufficient for the rough-path theory (and the powerful rough-path results) to be applied in this situation.

\

Our construction of a product Lévy area will only rely on the consideration of the law of the $q$-Bm, that is on the process as given by Definition \ref{defi:q-bm}. In other words, no reference will be made to any particular representation of the process as a map with values in some specific algebra (just as classical probability theory builds upon the law of the Brownian motion and not upon its representation). The only reference to some particular representation of the $q$-Brownian motion (namely its standard representation on the $q$-Fock space) will occur in Section \ref{sec:comparison}, as a way to compare our rough objects with the constructions of \cite{donati}, based on the Fock space.

Besides, we have chosen in this study to focus on the case where $q\in[0,1)$ and introduce the $q$-Brownian motion as a natural interpolation between the free and the standard Brownian motions. We are aware that the definition of a $q$-Bm can also be extended to every $q\in (-1,0)$, that is up to the \enquote{anticommutative} situation $q\to -1$. In fact, we must here specify that the positivity assumption on $q$ will be used in an essential way for the construction of the product Lévy area (see for instance (\ref{positivity-assumption})), and at this point, we do not know if such an object could also be exhibited in the case $q<0$.

\

As we already sketched it in the above description of our results, the study is organized as follows. In Section \ref{sec:general-rp}, we will recall the general non-commutative rough-path results obtained in \cite{deya-schott} and at the core of the present analysis. Section \ref{sec:levy-area} is devoted to the construction of the main object involved in the rough-path procedure, that is a product Lévy area above the $q$-Bm. Finally, Section \ref{sec:comparison} focuses on the $L^2(\vp)$-comparison of the rough constructions with more standard Itô/Stratonovich definitions.

\

\noindent
\textbf{Acknowledgements.} We are very grateful to two anonymous referees for their attentive reading and insightful suggestions.

\

\section{General rough-path results in $C^\ast$-algebras}\label{sec:general-rp}

Our strategy to develop a robust $L^\infty(\vp)$-stochastic calculus for the $q$-Bm is based on the non-commutative rough-path considerations of \cite[Section 4]{deya-schott}. Therefore, before we can turn to the $q$-Bm situation, it is necessary for us to recall the main results of the theoretical analysis carried out in \cite{deya-schott}. This requires first a few brief preliminaries on functional calculus in a $C^\ast$-algebra (along the framework of \cite{biane-speicher}), as well as precisions on the topologies involved in the study. Special emphasis will be put on the cornerstone of the rough-path machinery, the \emph{product Lévy area}, around which the whole integration procedure can be naturally expanded. 

Note that the considerations of this section apply to a general $C^\ast$-algebra $\ca$, that we fix from now on. In particular, no additional trace operator will be required here. As before, we denote by $\|.\|$ the operator norm on $\ca$, and $\ca_\ast$ will stand for the set of self-adjoint operators in $\ca$. We also fix an arbitrary time horizon $T>0$ for the whole section.

\subsection{Tensor product}\label{subsec:tensor}

Let $\ca \otimes \ca$ be the algebraic tensor product generated by $\ca$, and just as in \cite{biane-speicher}, denote by $\sharp$ the natural product interaction between $\ca$ and $\ca\otimes \ca$, that is the linear extension of the formula
$$(U_1 \otimes U_2) \sharp X=X\sharp (U_1 \otimes U_2):=U_1X  U_2\ , \quad \text{for all} \ U_1,U_2,X \in \ca \ .$$
In a similar way, set, for all $U_1,U_2,U_3,X \in \ca$,
$$X \sharp (U_1 \otimes U_2 \otimes U_3) :=(U_1X U_2) \otimes U_3 \quad , \quad (U_1 \otimes U_2 \otimes U_3) \sharp X:=U_1 \otimes (U_2XU_3) \ .$$
Our developments will actually involve the \emph{projective tensor product} $\ca \hat{\otimes}\ca$ of $\ca$, that is the completion of $\ca \otimes \ca$ with respect to the norm
$$\|\bu\|=\|\bu\|_{\ca \hat{\otimes} \ca}:=\inf \sum_i \| U_i\| \|V_i\| \ ,$$
where the infimum is taken over all possible representation $\bu=\sum_i U_i \otimes V_i$ of $\bu$. It is readily checked that for all $\mathbf{U}\in \ca \otimes \ca $ and $X\in \ca$, one has $\| \mathbf{U} \sharp X\| \leq \|\mathbf{U}\| \|X\|$, and so the above $\sharp$-product continuously extends to $\ca \hat{\otimes}\ca$.
These considerations can, of course, be generalized to the $n$-th projective tensor product $\ca^{\hat{\otimes}n}$, $n\geq 1$, and we will still denote by $\| .\|$ the projective tensor norm on $\ca^{\hat{\otimes}n}$.

Along the same terminology as in \cite{biane-speicher}, we will call any process with values in $\ca \hat{\otimes} \ca$, resp. $\ca \hat{\otimes} \ca \hat{\otimes}\ca$, a \emph{biprocess}, resp. a triprocess.

\subsection{Functional calculus in a $C^\ast$-algebra}\label{subsec:funct} Following again the presentation of \cite{biane-speicher}, let us introduce the class of functions $f$ defined for every integer $k\geq 0$ by
\begin{equation}
\mathbf{F}_k:=\{f:\R \to \C: \ f(x)=\int_{\R} e^{\imath \xi x} \mu_f(\mathrm{d}\xi) \ \text{with} \ \int_{\R} |\xi|^i \, \mu_f(\mathrm{d}\xi) < \infty \ \text{for every} \ i \in \{0,\ldots,k\}\},
\end{equation}
and set, if $f\in \mathbf{F}_k$, $\|f\|_k:=\sum_{i=0}^k \int_{\R} |\xi|^i \, \mu_f(\mathrm{d}\xi)$. Then, with all $f\in \mathbf{F}_0$ and $X\in \ca_\ast$, we associate the operator $f(X)$ along the formula
$$f(X):=\int_{\R} e^{\imath \xi X} \mu_f(\mathrm{d}\xi) \ ,$$
where the integral in the right-hand side is uniformly convergent in $\ca$. This straightforward operator extension of functional calculus happens to be compatible with Taylor expansions of $f$, a central ingredient towards the application of rough-path techniques. The following notion of tensor derivatives naturally arises in the procedure (see the subsequent Examples \ref{ex-1} and \ref{ex-2}):

\smallskip

\begin{definition}
For every $f\in \mathbf{F}_1$, resp. $f\in \mathbf{F}_2$, we define the \emph{tensor derivative}, resp. \emph{second tensor derivative}, of $f$ by the formula: for every $X\in \ca_\ast$,
$$\partial f(X):=\int_0^1 \mathrm{d}\al \int_{\R} \imath \xi \, [e^{\imath \al \xi X} \otimes  e^{\imath (1-\al)\xi X}]\, \mu_f(\mathrm{d}\xi) \quad \in \ca \hat{\otimes} \ca\ ,$$
$$\text{resp.} \quad \partial^2 f(X):=-\iint_{\substack{\al,\be \geq 0\\ \al+\be \leq 1}} \mathrm{d}\al \, \mathrm{d}\beta \int_{\R}  \xi^2 \, [e^{\imath \al \xi X}\otimes  e^{\imath \beta\xi X} \otimes  e^{\imath (1-\al-\beta)\xi X}]\, \mu_f(\mathrm{d}\xi) \quad \in \ca \hat{\otimes} \ca \hat{\otimes} \ca\ .$$
\end{definition}

\subsection{Filtration and Hölder topologies}\label{subsec:gubi} 

From now on and for the rest of Section \ref{sec:general-rp}, we fix a process $X:[0,T]\to \ca_\ast$ and assume that $X$ is $\ga$-Hölder regular, that is
$$\sup_{0\leq s<t\leq T}\frac{\|X_t-X_s\|}{|t-s|^\ga} \ < \ \infty \ ,$$
for some fixed coefficient $\ga\in (1/3,1/2)$.

\smallskip

With this process in hand, we denote by $\{\ca_t\}_{t\in[0,T]}=\{\ca^X_t\}_{t\in[0,T]}$ the \emph{filtration} generated by $X$, that is, for each $t\in [0,T]$, $\ca_t$ stands for the closure (with respect to the operator norm) of the unital subalgebra of $\ca$ generated by $\{X_s\}_{0\leq s\leq t}$.

\smallskip

For any fixed interval $I\subset [0,T]$, a process $Y: I\to \ca$ is said to be \emph{adapted} if for each $t\in I$, $Y_t\in \ca_t$. In the same way, a biprocess $\bu:[0,T]\to \ca \hat{\otimes}\ca$, resp. a triprocess $\mathcal{U}:[0,T]\to \ca \hat{\otimes} \ca \hat{\otimes}\ca$, is \emph{adapted} if for each $t\in [0,T]$, $\bu_t\in \ca_t \hat{\otimes}\ca_t$, resp. $\mathcal{U}_t\in \ca_t \hat{\otimes} \ca_t \hat{\otimes}\ca_t$.

\smallskip

Let us now briefly recall the topologies involved in the rough-path procedure, as far as time-roughness is concerned (and following Gubinelli's approach \cite{gubi}). For $V:=\ca^{\hat{\otimes}n}$ ($n\geq 1$), let $\cac_1(I;V)$ be the set of continuous $V$-valued maps on $I$, and $\cac_2(I;V)$ the set of continuous $V$-valued maps on the simplex $\cs_2:=\{(s,t)\in I^2: \ s\leq t\}$ that vanish on the diagonal. The increments of a path $g\in \cac_1(I;V)$ will be denoted by $\der g_{st}:=g_t-g_s$ ($s\leq t$) and for every $\al\in (0,1)$, we define the $\al$-Hölder spaces $\cac_1^\al(I;V)$, resp. $\cac_2^\al(I;V)$, as
$$\cac_1^\al(I;V):=\big\{ h \in \cac_1(I;V): \ \cn[h;\cac_1^\al(I;V)]:= \sup_{s<t \in I} \frac{\|\der h_{st}\|}{\lln t-s \rrn^\al} \ < \infty \big\} \ ,$$
resp.
$$\cac_2^\al(I;V):=\big\{h \in \cac_2(I;V): \  \cn[h;\cac_2^\al(I;V)]:=\sup_{s<t \in I} \frac{\| h_{st}\|}{\lln t-s \rrn^\al} \ < \infty \big\} \ .$$

\subsection{The product Lévy area}\label{subsec:levy}
Consider the successive spaces
$$\cl_T(\ca_{\rightharpoonup}):=\{L=(L_{st})_{0\leq s < t \leq T}: \ L_{st}\in \cl(\ca_s\hat{\otimes} \ca_s,\ca)\} \ ,$$
$$\cl_T(\ca_{\to}):=\{L=(L_{st})_{0\leq s < t \leq T}: \ L_{st}\in \cl(\ca_s\hat{\otimes} \ca_s,\ca_t)\} \ ,$$
and for every $\la \in [0,1]$, denote by $\cac_2^\la(\cl_T(\ca_{\rightharpoonup}))$, resp. $\cac_2^\la(\cl_T(\ca_\to))$, the set of elements $L\in \cl_T(\ca_{\rightharpoonup})$, resp. $L\in \cl_T(\ca_\to)$, for which the following quantity is finite:
\begin{equation}\label{defi-norm-levy area}
\cn[L;\cac_2^\la(\cl_T(\ca_{\rightharpoonup}))]:=\sup_{\substack{s<t\in [0,T]\\ \bu\in \ca_s \hat{\otimes} \ca_s,\bu\neq 0}}\frac{\|L_{st}[\bu]\|}{\lln t-s\rrn^\la \| \bu\|} \ .
\end{equation}

\smallskip

At this point, recall that we have fixed a $\ga$-Hölder process $X:[0,T]\to \ca_\ast$ ($\ga \in (1/3,1/2)$) for the whole section \ref{sec:general-rp}.

\begin{definition}\label{defi:aire-gene}
We call \emph{product Lévy area} above $X$ any process $\mathbb{X}^{2}$ such that:

\smallskip

\noindent
(i) ($2\ga$-roughness) $\mathbb{X}^2\in \cac_2^{2\ga}(\cl_T(\ca_\to))$,

\smallskip

\noindent
(ii) (Product Chen identity) For all $s<u<t$ and $\mathbf{U}\in \ca_s\hat{\otimes}\ca_s$, 
\begin{equation}\label{chen}
\mathbb{X}^2_{st}[\mathbf{U}]-\mathbb{X}^2_{su}[\mathbf{U}]-\mathbb{X}^2_{ut}[\mathbf{U}]=(\mathbf{U} \sharp \der X_{su}) \, \der X_{ut}\ .
\end{equation}
\end{definition}

\smallskip

\begin{remark}\label{rk:prod-levy-area}
Recall that Definition \ref{defi:aire-gene} is derived from the theoretical analysis performed in \cite[Section 4]{deya-schott} with equation (\ref{equa-diff-q-bm}) in mind. At some heuristic level, and following the classical rough-path approach, the notion of product Lévy area must be seen as some abstract version of the iterated integral 
\begin{equation}\label{lebesgue-levy-area}
\mathbb{X}^2_{st}[\mathbf{U}]= \int_s^t (\mathbf{U} \sharp \der X_{su}) \, \mathrm{d}X_u \ ,
\end{equation}
noting that definition of this integral is not clear a priori for a non-differentiable process $X$. 
As pointed out in \cite{deya-schott}, the above notion of \enquote{Lévy area} is specifically designed to handle the non-commutative algebra dynamics of (\ref{equa-diff-q-bm}), and it offers a much more efficient approach than general rough-path theory based on \enquote{tensor} Lévy areas (the object considered in \cite{lyons-book}). In a commutative setting (i.e., if $\ca$ were a commutative algebra), the basic process $\mathbb{A}_{st}(\bu):=\frac12 (\bu\sharp \delta X_{st}) \, \delta X_{st}$ would immediately provide us with such a product Lévy area. In the general (non-commutative) situation though, this path only satisfies 
$$\mathbb{A}_{st}[\bu]-\mathbb{A}_{su}[\bu]-\mathbb{A}_{ut}[\bu]=\frac12 \big[ (\bu\sharp \delta X_{su}) \, \delta X_{ut}+(\bu\sharp \delta X_{ut}) \, \delta X_{su} \big] \ ,$$
so that $\mathbb{A}$ may not meet the product-Chen condition $(ii)$, making Definition \ref{defi:aire-gene} undoubtedly relevant.
\end{remark}

\subsection{Controlled (bi)processes and integration}\label{subsec:main}
A second ingredient in the rough-path machinery (in addition to a \enquote{Lévy area}) consists in the identification of a suitable class of integrands for the future rough integral with respect to $X$. The following definition naturally arises in this setting:

\begin{definition}\label{def:control-proc}
Given a time interval $I\subset [0,T]$, we call \emph{adapted controlled process}, resp. \emph{biprocess}, on $I$ any adapted process $Y\in \cac_1^\ga(I;\ca)$, resp. biprocess  $\bu \in \cac_1^\ga(I;\ca \hat{\otimes} \ca)$, with increments of the form
\begin{equation}\label{decompo-gene}
(\der Y)_{st}=\by^X_s \sharp (\der X)_{st}+Y^\flat_{st}\ , \quad s<t\in I \ ,
\end{equation}
resp.
\begin{equation}\label{decompo-bipro}
(\der \bu)_{st}=(\der X)_{st} \sharp \mathcal{U}_s^{X,1} +\mathcal{U}_s^{X,2} \sharp (\der X)_{st}+ \bu^\flat_{st} \ , \quad s<t\in I \ ,
\end{equation}
for some adapted biprocess $\by^X \in \cac_1^{\ga}(I;\ca \hat{\otimes} \ca)$ , resp. adapted triprocesses $\mathcal{U}^{X,1},\mathcal{U}^{X,2}\in \cac_1^{\ga}(I;\ca^{\hat{\otimes}3})$, and $Y^\flat \in \cac_2^{2\ga}(I;\ca)$, resp. $\bu^\flat \in \cac_2^{2\ga}(I;\ca \hat{\otimes} \ca)$. We denote by $\cq_X(I)$, resp. $\mathbf{Q}_X(I)$, the space of adapted controlled processes, resp. biprocesses, on $I$, and finally we define $\cq_X^\ast(I)$ as the subspace of controlled processes $Y \in \cq_X(I)$ for which one has both $Y^\ast_s=Y_s$ and $(\by^X_s)^\ast=\by^X_s$ for every $s\in I$.
\end{definition}

\smallskip

\begin{example}\label{ex-1}
If $f,g \in \mathbf{F}_2$ and $Y\in \cq_X^\ast(I)$ with decomposition (\ref{decompo-gene}), then $\bu:=f(Y) \otimes g(Y) \in \mathbf{Q}_X(I)$ with
$$\mathcal{U}^{X,1}_s := [\partial f(Y_s) \, \by^X_s ] \otimes g(Y_s) \quad , \quad \mathcal{U}^{X,2}_s=f(Y_s) \otimes [\partial g(Y_s) \, \by^X_s] \ .$$
\end{example}
\begin{example}\label{ex-2}
If $f\in \mathbf{F}_3$, then $\bu:=\partial f(X) \in \mathbf{Q}_X([0,T])$ with $\mathcal{U}^{X,1}_s =\mathcal{U}^{X,2}_s=\partial^2 f(X_s)$.
\end{example}

\smallskip

We are finally in a position to recall the definition of the \emph{rough integral} with respect to $X$, which can be expressed (among other ways) as the limit of \enquote{corrected Riemann sums}:

\begin{proposition}\label{prop-int-gen}
\cite[Proposition 4.12]{deya-schott} Assume that we are given a product Lévy area $\mathbb{X}^2$ above $X$, in the sense of Definition \ref{defi:aire-gene}, as well as a time interval $I=[\ell_1,\ell_2]\subset [0,T]$. Then for every $\bu\in \mathbf{Q}_X(I)$ with decomposition (\ref{decompo-bipro}), all $s<t\in I$ and every subdivision $D_{st} = \{t_0=s<t_1 <\ldots<t_n=t\}$ of $[s,t]$ with mesh $|D_{st}|$ tending to $0$, the corrected Riemann sum
$$\sum_{t_i\in D_{st}} \Big\{ \bu_{t_i} \sharp (\der X)_{t_it_{i+1}}+[\mathbb{X}^2_{t_it_{i+1}}\times \id](\mathcal{U}^{X,1}_{t_i})+[\id \times \mathbb{X}^{2,\ast}_{t_it_{i+1}}](\mathcal{U}^{X,2}_{t_i})\Big\}$$
converges in $\ca$ as $|D_{st}| \to 0$. We call the limit the \emph{rough integral} (from $s$ to $t$) of $\bu$ against $\mathbb{X}:=(X,\mathbb{X}^2)$, and we denote it by $\int_s^t \bu_u \sharp \mathrm{d}\mathbb{X}_u$. This construction satisfies the two following properties: 

\smallskip

\noindent
$\bullet$ \text{(Consistency)} If $X$ is a differentiable process in $\ca$ and $\mathbb{X}^2$ is understood in the classical Lebesgue sense (that is, as in (\ref{lebesgue-levy-area})), then $\int_s^t \bu_u \sharp \mathrm{d}\mathbb{X}_u$ coincides with the classical Lebesgue integral $\int_s^t [ \bu_u \sharp X'_u] \, du$;

\smallskip

\noindent
$\bullet$ \text{(Stability)} For every $A\in \ca$, there exists a unique process $Z\in \cq_X(I)$ such that $Z_{\ell_1}=A$ and $(\der Z)_{st}=\int_s^t \bu_u \sharp \mathrm{d}\mathbb{X}_u$ for all $s<t\in I$.
\end{proposition}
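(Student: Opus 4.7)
The plan is to realize the limit via (a non-commutative adaptation of) Gubinelli's sewing lemma, applied to the ``germ''
$$h_{st}:=\bu_s \sharp (\der X)_{st}+[\mathbb{X}^2_{st}\times \id](\mathcal{U}^{X,1}_s)+[\id \times \mathbb{X}^{2,\ast}_{st}](\mathcal{U}^{X,2}_s)$$
encoding a single increment of the corrected Riemann sum. The sewing principle asserts that if both the size bound $\|h_{st}\|\lesssim |t-s|^{\ga}$ and the 3-increment bound
$$\|\der h_{sut}\|:=\|h_{st}-h_{su}-h_{ut}\|\lesssim |t-s|^{3\ga}$$
hold with $3\ga>1$, then the corrected Riemann sums associated with $h$ converge, as the mesh tends to $0$, to a unique additive 2-index map $\cj_{st}$ with $\|\cj_{st}-h_{st}\|\lesssim |t-s|^{3\ga}$. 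Defining $\int_s^t \bu_u \sharp \mathrm{d}\mathbb{X}_u:=\cj_{st}$ then yields the desired integral; its $\ca_t$-adaptedness is immediate because each summand of $h_{st}$ already belongs to $\ca_t$ by the very definitions of controlled biprocess and product L\'evy area.

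The bulk of the argument is thus the estimate on $\der h_{sut}$. For the first piece of $h$ one computes
$$\bu_s \sharp (\der X)_{st}-\bu_s \sharp (\der X)_{su}-\bu_u \sharp (\der X)_{ut}=-(\der \bu)_{su}\sharp (\der X)_{ut}.$$
Inserting the controlled-biprocess decomposition (\ref{decompo-bipro}) into $(\der \bu)_{su}$ produces three summands: the remainder term $\bu^\flat_{su}\sharp (\der X)_{ut}$ is already of order $|t-s|^{3\ga}$, whereas the two ``linear-in-$X$'' summands must be compensated by the corresponding increments of the $\mathbb{X}^2$-corrections. For the $\mathcal{U}^{X,1}$-contribution, adding and subtracting $[\mathbb{X}^2_{ut}\times \id](\mathcal{U}^{X,1}_s)$ in
$$[\mathbb{X}^2_{st}\times \id](\mathcal{U}^{X,1}_s)-[\mathbb{X}^2_{su}\times \id](\mathcal{U}^{X,1}_s)-[\mathbb{X}^2_{ut}\times \id](\mathcal{U}^{X,1}_u)$$
allows one to invoke the product-Chen identity (\ref{chen}), which precisely cancels the bad term $[(\der X)_{su}\sharp \mathcal{U}^{X,1}_s]\sharp (\der X)_{ut}$ coming from the expansion of $(\der \bu)_{su}\sharp (\der X)_{ut}$; the residual $[\mathbb{X}^2_{ut}\times \id]((\der \mathcal{U}^{X,1})_{su})$ is of order $|t-s|^{3\ga}$ by combining the $2\ga$-roughness of $\mathbb{X}^2$ with the $\ga$-H\"older regularity of $\mathcal{U}^{X,1}$. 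The $\mathcal{U}^{X,2}$-contribution is treated symmetrically via $\mathbb{X}^{2,\ast}$.

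With convergence of the Riemann sums in hand, the two auxiliary statements follow quickly. \emph{Consistency} is obtained by observing that when $X$ is a Lipschitz process in $\ca$, both $\mathbb{X}^2_{st}$ and $(\der X)_{su}(\der X)_{ut}$ are of order $|t-s|^2$, so the two correction terms in $h_{st}$ vanish in the Riemann limit and one recovers $\int_s^t [\bu_u\sharp X'_u]\,du$. For \emph{stability}, setting $Z_t:=A+\int_{\ell_1}^t \bu_u \sharp \mathrm{d}\mathbb{X}_u$ gives $(\der Z)_{st}=\bu_s \sharp (\der X)_{st}+Z^\flat_{st}$ with $Z^\flat\in \cac_2^{2\ga}(I;\ca)$ thanks to the sewing estimate, so that $Z\in \cq_X(I)$; uniqueness is the usual consequence that any element of $\cq_X(I)$ starting from $0$ whose Gubinelli derivative vanishes and whose remainder lies in $\cac_2^{2\ga}$ is forced to be identically $0$.

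I expect the main technical obstacle to be the careful bookkeeping of the tensor-product combinatorics in the $\der h_{sut}$ computation, in particular the identification of how the product-Chen identity distributes over the one-sided actions $[\mathbb{X}^2\times \id]$ and $[\id\times \mathbb{X}^{2,\ast}]$ acting on distinct slots of the triprocesses $\mathcal{U}^{X,1}$ and $\mathcal{U}^{X,2}$. Once this combinatorial cancellation is pinned down, the remaining estimates reduce to a routine interpolation of the H\"older bounds on $X$, $\mathbb{X}^2$, $\bu$, $\mathcal{U}^{X,i}$ and $\bu^\flat$ provided by the controlled-biprocess hypothesis.
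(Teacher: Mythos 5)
Your proposal is correct and follows essentially the same route as the proof this statement rests on: the paper itself recalls Proposition \ref{prop-int-gen} from \cite[Proposition 4.12]{deya-schott} without reproving it, and the argument there is exactly the non-commutative sewing/controlled-biprocess computation you describe, with the product-Chen identity (\ref{chen}) providing the cancellation of the $(\der X)_{su}$--$(\der X)_{ut}$ terms against the increments of the $\mathbb{X}^2$- and $\mathbb{X}^{2,\ast}$-corrections. One small remark on the stability part: uniqueness is immediate, since $Z_{\ell_1}=A$ together with the prescribed increments determines $Z_t=A+\int_{\ell_1}^t \bu_u\sharp\mathrm{d}\mathbb{X}_u$, so no controlled-path uniqueness lemma is needed --- and the auxiliary claim you invoke there is, as stated, false (e.g.\ $Z_t=(t-\ell_1)A$ has vanishing derivative part and a remainder in $\cac_2^{2\ga}$ since $2\ga<1$, yet is nonzero).
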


\begin{theorem}\label{sol}
\cite[Theorem 4.15]{deya-schott} Assume that we are given a product Lévy area $\mathbb{X}^2$ above $X$. Let $f=(f_1,\ldots,f_m) \in\mathbf{F}_3^m$, $g=(f_1^\ast,\ldots,f_m^\ast)$ or $(f_m^\ast,\ldots,f_1^\ast)$, and fix $A \in \ca_\ast$. Then the equation
\begin{equation}\label{sys-base}
Y_{0}=A \quad , \quad (\der Y)_{st}=\sum_{i=1}^m \int_s^t f_i(Y_u) \, \mathrm{d}\mathbb{X}_u \, g_i(Y_u)\ , \quad s<t \in [0,T]\ ,
\end{equation}
interpreted with Proposition \ref{prop-int-gen}, admits a unique solution $Y\in \cq_X^\ast([0,T])$.
\end{theorem}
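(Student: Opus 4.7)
The plan is to construct $Y$ by a Picard fixed-point argument in the Banach space $\cq_X^\ast(I)$ on a sufficiently short subinterval $I=[0,h]\subset [0,T]$, and then patch the local solutions into a global one defined on $[0,T]$.

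Given $Y\in\cq_X^\ast(I)$ with decomposition $(\der Y)_{st}=\by_s^X\sharp(\der X)_{st}+Y^\flat_{st}$, Example \ref{ex-1} ensures that for each $i$ the biprocess $\bu^{(i)}:=f_i(Y)\otimes g_i(Y)$ lies in $\mathbf{Q}_X(I)$, with triprocesses $\mathcal{U}^{X,1}_s=[\partial f_i(Y_s)\,\by^X_s]\otimes g_i(Y_s)$ and $\mathcal{U}^{X,2}_s=f_i(Y_s)\otimes[\partial g_i(Y_s)\,\by^X_s]$ (the assumption $f_i\in\mathbf{F}_3$ provides the two tensor derivatives required here plus a spare derivative used for difference estimates). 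Proposition \ref{prop-int-gen} then gives meaning to the rough integral $J_i(Y)_t:=\int_0^t f_i(Y_u)\,\mathrm{d}\mathbb{X}_u\,g_i(Y_u)$, and its stability property tells us that the map
$$\Gamma(Y)_t:=A+\sum_{i=1}^m J_i(Y)_t$$
defines a process in $\cq_X(I)$ with Gubinelli derivative $\by^{\Gamma(Y),X}_s=\sum_i f_i(Y_s)\otimes g_i(Y_s)$.

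The next step is to establish the a priori estimates allowing $\Gamma$ to be iterated. The standard non-commutative sewing argument on the corrected Riemann sums, with the product Chen identity (\ref{chen}) as the crucial cancellation mechanism, yields bounds of the form
$$\cn[\Gamma(Y);\cq_X^\ast(I)]\ \leq\ \|A\|+C\,F\bigl(\cn[Y;\cq_X^\ast(I)]\bigr)\cdot\bigl(h^\ga+h^{2\ga}\bigr)\cdot\bigl(\cn[X;\cac_1^\ga(I;\ca)]+\cn[\mathbb{X}^2;\cac_2^{2\ga}(\cl_T(\ca_\to))]\bigr),$$
with $F$ a polynomial depending on the $\mathbf{F}_3$-norms of the $f_i,g_i$, and a parallel Lipschitz estimate on differences $\Gamma(Y)-\Gamma(\tilde Y)$. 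For $h$ small enough — and since the rough-path norms on the right-hand side are uniformly controlled on $[0,T]$, $h$ may be chosen independently of the starting point — this makes $\Gamma$ a contraction on a closed ball of $\cq_X^\ast(I)$. Self-adjointness is preserved thanks to the specific symmetry of $g$: the adjoint of $f_i(Y)\otimes g_i(Y)$ (with $Y=Y^\ast$) is $g_i(Y)^\ast\otimes f_i(Y)^\ast$, and both prescribed choices $g=(f_1^\ast,\ldots,f_m^\ast)$ and $g=(f_m^\ast,\ldots,f_1^\ast)$ permute the index set $\{(f_i(Y),g_i(Y))\}_i$ under the operation $(U,V)\mapsto (V^\ast,U^\ast)$; combined with $X_t=X_t^\ast$ and the built-in $\ast$-symmetry of the defining sums in Proposition \ref{prop-int-gen} (through the transposed area $\mathbb{X}^{2,\ast}$), this forces $\Gamma(Y)^\ast=\Gamma(Y)$. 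Banach's theorem thus gives a unique local solution in $\cq_X^\ast(I)$, and iterating the argument on consecutive intervals of length $h$ yields the unique global solution $Y\in\cq_X^\ast([0,T])$.

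The main obstacle is the quantitative estimate on the controlled-process norm of the rough integral, that is, controlling $\cn[\int \bu\,\sharp\,\mathrm{d}\mathbb{X};\cq_X^\ast(I)]$ polynomially in $\cn[\bu;\mathbf{Q}_X(I)]$ while extracting a factor going to $0$ with $h$. This is the non-commutative analogue of Gubinelli's sewing bound, and making it work for both the value process and its Gubinelli derivative (and simultaneously for the Lipschitz estimate needed for the contraction) is the only delicate point; once it is in place, the rest of the argument is the classical rough-differential-equation scheme.
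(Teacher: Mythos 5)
Theorem \ref{sol} is not proved in the present paper: it is quoted from \cite{deya-schott} (Theorem 4.15), where the argument is exactly the scheme you describe --- a Picard/contraction iteration in the space of adapted controlled processes $\cq_X^\ast$, built on quantitative sewing-type estimates for the corrected Riemann sums of Proposition \ref{prop-int-gen}, with the self-adjointness preserved through the symmetric choice of $g$ and the transposed area, and with patching over subintervals of uniform length (legitimate here since functions in $\mathbf{F}_3$ and their tensor derivatives are bounded). Your outline therefore matches the original route; the only caveat is that the contraction estimate you defer to a single sentence is the actual substance of that proof rather than a routine step.
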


\subsection{Approximation results}
Another advantage of the rough-path approach - beyond its consistency and stability properties - lies in the continuity of the constructions with respect to the driving (rough) path. In this non-commutative setting, and following the approach of \cite{deya-schott}, the phenomenon can be illustrated through several \enquote{Wong-Zakaï-type} approximation results, which we propose to briefly review here. To this end, for every sequence of partitions $(D^n)$ of $[0,T]$ with mesh tending to zero, denote by $\{X^n_t\}_{t\in [0,T]}=\{X^{D^n}_t\}_{t\in [0,T]}$ the sequence of linear interpolations of $X$ along $D^n$, i.e., if $D^n:=\{0=t_0< t_1<\ldots <t_k=T\}$, 
$$X^n_t:=X_{t_i}+\frac{t-t_i}{t_{i+1}-t_i} \delta X_{t_i t_{i+1}} \quad \text{for} \ t\in [t_i,t_{i+1}]\ .$$
Then consider the sequence of approximated product Lévy areas defined for every $\bu\in \ca \hat{\otimes} \ca$ as
\begin{equation}\label{levy-area-appr}
\mathbb{X}^{2,n}_{st}[\bu]=\mathbb{X}^{2,D^n}_{st}[\bu]:=\int_s^t (\bu\sharp \der X^n_{su}) \, \mathrm{d}X^n_u \ ,\quad  s<t\in [0,T]\ ,
\end{equation}
where the integral is understood in the classical Lebesgue sense. In other words, if $t_k \leq s <t_{k+1}\leq t_\ell \leq t < t_{\ell+1}$,
\begin{align*}
&\mathbb{X}^{2,n}_{st}[\bu]=\int_s^{t_{k+1}} \frac{du}{t_{k+1}-t_k} \big( \bu \sharp \delta X^n_{su} \big) (\delta X)_{t_k t_{k+1}}\\
&\hspace{1cm}+\sum_{i=k+1}^{\ell-1}\int_{t_i}^{t_{i+1}} \frac{du}{t_{i+1}-t_i} \big( \bu \sharp \delta X^n_{su} \big) (\delta X)_{t_i t_{i+1}}+\int_{t_\ell}^{t} \frac{du}{t_{k+1}-t_k} \big( \bu \sharp \delta X^n_{su} \big) (\delta X)_{t_k t_{k+1}} \ .
\end{align*}

\begin{proposition}\label{pro-approx-1}
\cite[Proposition 4.16]{deya-schott} Assume that there exists a product Lévy area $\mathbb{X}^2$ above $X$ such that, as $n$ tends to infinity,
\begin{equation}\label{cond-conv}
\cn[X^n - X ;\cac_1^\ga([0,T];\ca)]\to 0 \quad \text{and} \quad \cn[\mathbb{X}^{2,n} -\mathbb{X}^2; \cac_2^{2\ga}(\cl_T(\ca_{\rightharpoonup}))]\to 0 \ .
\end{equation}
Then for all $f,g \in  \mathbf{F}_3$, it holds that 
\begin{equation}\label{resu-conv-1}
\int_.^. f(X^n_u) \, \mathrm{d}X^n_u \, g(X^n_u) \xrightarrow{n\to \infty} \int_.^. f(X_u) \, \mathrm{d}\mathbb{X}_u \, g(X_u) \quad \text{in} \ \ \cac_2^\ga([0,T];\ca)\ ,
\end{equation}
where the integral in the limit is interpreted with Proposition \ref{prop-int-gen}. Similarly, for all $f \in \mathbf{F}_3$, one has
\begin{equation}\label{resu-conv-2}
\int_.^. \partial f(X^n_u) \sharp \mathrm{d}X^n_u \xrightarrow{n\to \infty} \int_.^. \partial f (X_u) \sharp \mathrm{d}\mathbb{X}_u \quad \text{in} \ \ \cac_2^\ga([0,T];\ca)\ ,
\end{equation}
which immediately yields Itô's formula: for all $s<t\in [0,T]$,
\begin{equation}
\der f(X)_{st}=\int_s^t \partial f (X_u) \sharp \mathrm{d}\mathbb{X}_u \ .
\end{equation}
\end{proposition}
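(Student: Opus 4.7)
The plan is to derive both convergence statements from a single \emph{continuity of the rough integral in the driving rough path}, and then obtain It\^o's formula by passing to the limit in a classical Taylor identity. For each $n$, the path $X^n$ is piecewise affine, hence a.e.\ differentiable, and the Lebesgue definition (\ref{levy-area-appr}) makes $\mathbb{X}^{2,n}$ the canonical product L\'evy area above $X^n$. Consequently, by the consistency part of Proposition \ref{prop-int-gen}, the Lebesgue integrals appearing in (\ref{resu-conv-1}) and (\ref{resu-conv-2}) coincide with the rough integrals $\int f(X^n_u) \, \mathrm{d}\mathbb{X}^n_u \, g(X^n_u)$ and $\int \partial f(X^n_u) \sharp \mathrm{d}\mathbb{X}^n_u$, where $\mathbb{X}^n := (X^n, \mathbb{X}^{2,n})$. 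This step reduces everything to comparing two rough integrals driven by two \emph{rough} paths $\mathbb{X}^n$ and $\mathbb{X}$ that are close in the joint topology (\ref{cond-conv}).

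Next I would identify the controlled (bi)processes on both sides via Examples \ref{ex-1} and \ref{ex-2}, so as to make the decomposition (\ref{decompo-bipro}) explicit. For $\bu^n := f(X^n) \otimes g(X^n)$, one has $\mathcal{U}^{X^n,1}_s = [\partial f(X^n_s)\, \1] \otimes g(X^n_s)$ and $\mathcal{U}^{X^n,2}_s = f(X^n_s) \otimes [\partial g(X^n_s)\, \1]$; for the pair $(f,g)$ replaced by $(\partial f, \id)$ the data are provided by Example \ref{ex-2}. Using the uniform Hölder bound $\cn[X^n;\cac_1^\ga]\lesssim \cn[X;\cac_1^\ga]$ (a standard property of linear interpolations of $\ga$-H\"older paths for $\ga\leq 1$) together with functional-calculus estimates of the form $\|\delta f(X^n)_{st}\|\lesssim \|f\|_1 \|\delta X^n_{st}\|$ and $\|f(X^n_s)-f(X_s)\|\lesssim \|f\|_1 \|X^n_s - X_s\|$, I would prove that $\bu^n\to \bu$ and $\partial f(X^n)\to \partial f(X)$ as controlled biprocesses, i.e.\ convergence of both the $\cac_1^\ga$-parts and the remainder $\bu^\flat$ in $\cac_2^{2\ga}$.

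The core step is then a Lipschitz-type bound for the rough integration map: if $\mathbb{X},\tilde{\mathbb{X}}$ are two rough paths (with the same filtration compatibility) and $\bu,\tilde{\bu}$ are corresponding controlled biprocesses, one shows
\[
\cn\Bigl[\int\bu\sharp \mathrm{d}\mathbb{X} - \int \tilde{\bu}\sharp \mathrm{d}\tilde{\mathbb{X}};\cac_2^{\ga}\Bigr] \lesssim C\bigl(\cn[X-\tilde X;\cac_1^\ga]+\cn[\mathbb{X}^2-\tilde{\mathbb{X}}^2;\cac_2^{2\ga}]+\|\bu-\tilde{\bu}\|_{\text{controlled}}\bigr),
\]
with $C$ depending on the joint norms. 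This estimate is obtained by inspecting the proof of Proposition \ref{prop-int-gen}: one re-runs the sewing-type argument giving convergence of the corrected Riemann sums, but now applied to the difference of the two compensated sums, and combines it with standard algebraic manipulations (add and subtract common terms, exploit bilinearity of $\sharp$ and of $[\cdot\times\id]$, $[\id \times \cdot]$) to bring out factors of $X-\tilde X$ and $\mathbb{X}^2-\tilde{\mathbb{X}}^2$. Applied with $\tilde{\mathbb{X}}=\mathbb{X}^n$, $\tilde{\bu}=\bu^n$ and using (\ref{cond-conv}) together with the previous step, this yields (\ref{resu-conv-1}) and (\ref{resu-conv-2}) at once. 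The delicate point, and probably the main obstacle, lies precisely in this sewing-stability argument: one has to make sure that all error terms retain the $(2\ga + \ga)$-H\"older factor in $|t_{i+1}-t_i|$ needed for summability when passing to the limit $|D_{st}|\to 0$, and that the bounds are \emph{uniform} in $n$ despite the rough dependence through $\mathbb{X}^{2,n}$.

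Finally, for It\^o's formula: since $X^n$ is a.e.\ differentiable, the classical fundamental theorem of calculus in the functional-calculus setting (which is essentially the very definition of $\partial f$ through the Duhamel-type identity $\delta(e^{\imath\xi X^n})_{st}=\imath\xi\int_0^1 e^{\imath\al\xi X^n_s}\, \delta X^n_{st}\, e^{\imath(1-\al)\xi X^n_s}\, \mathrm{d}\al + O(|t-s|^{2\ga})$ integrated against $\mu_f$) gives the exact identity
\[
\delta f(X^n)_{st}=\int_s^t \partial f(X^n_u) \sharp \mathrm{d}X^n_u.
\]
Using $\|f(X^n)-f(X)\|_\infty\to 0$ on the left and (\ref{resu-conv-2}) on the right, we pass to the limit and obtain $\delta f(X)_{st}=\int_s^t \partial f(X_u)\sharp \mathrm{d}\mathbb{X}_u$, which completes the proof.
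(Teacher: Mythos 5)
This proposition is imported verbatim, together with its proof, from \cite{deya-schott} (Proposition 4.16); the present paper proves nothing here, so the only meaningful comparison is with that reference. Your outline — use consistency to rewrite the Lebesgue integrals as rough integrals against $(X^n,\mathbb{X}^{2,n})$, show convergence of the controlled data $f(X^n)\otimes g(X^n)$ and $\partial f(X^n)$ via the functional-calculus bounds, establish a local Lipschitz (sewing-type) estimate for the rough integration map jointly in the driving rough path and the controlled biprocess, and finally pass to the limit in the exact identity $\delta f(X^n)_{st}=\int_s^t \partial f(X^n_u)\sharp \mathrm{d}X^n_u$ valid for the piecewise-affine $X^n$ — is precisely the standard route and matches the strategy of the cited reference, so the approach is sound. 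The one point you should not leave implicit is the adaptedness mismatch: for $u$ strictly between dyadic points, $f(X^n_u)\otimes g(X^n_u)$ involves $X_{t_{i+1}}$ with $t_{i+1}>u$, hence is not an element of $\ca_u\hat{\otimes}\ca_u$, whereas the convergence hypothesis (\ref{cond-conv}) only controls $\mathbb{X}^{2,n}-\mathbb{X}^2$ through the norm (\ref{defi-norm-levy area}), i.e.\ on arguments taken in $\ca_s\hat{\otimes}\ca_s$; the comparison of the two corrected Riemann sums must therefore be organized so that the area difference is only ever evaluated on adapted arguments (e.g.\ by first replacing the interpolated integrands by their values at the left dyadic points, at the cost of an extra $O(2^{-n(1/2-\ga)})$ term), a reduction your sketch glosses over but which is needed for the Lipschitz estimate to apply as stated.
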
 

Finally, for some fixed $f=(f_1,\ldots,f_m) \in \mathbf{F}_3^m$ and $g=(f_1^\ast,\ldots,f_m^\ast)$ (or $g:=(f_m^\ast,\ldots,f_1^\ast)$), let us denote by $Y^n=Y^{D^n}$ the solution of the classical Lebesgue equation on $[0,T]$
$$Y^n_0=A\in \ca_\ast \quad , \quad dY^n_t =\sum\nolimits_{i=1}^m f_i(Y^n_t) \, \mathrm{d} X^n_t \, g_i(Y^n_t)\ .$$

\begin{theorem}\label{theo-approx}
\cite[Theorem 4.17]{deya-schott} Under the assumptions of Proposition \ref{pro-approx-1}, one has $Y^n \xrightarrow{n\to \infty} Y$ in $\cac_1^\ga([0,T];\ca)$, where $Y$ is the solution of (\ref{sys-base}) given by Theorem \ref{sol}.
\end{theorem}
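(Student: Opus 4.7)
The plan is to recast $Y^n$ as the solution of the rough differential equation (\ref{sys-base}) driven by the enhanced rough path $\mathbb{X}^n:=(X^n,\mathbb{X}^{2,n})$, and then invoke a continuity-of-solution-map estimate with respect to the driving rough path, using the convergence (\ref{cond-conv}) as input.

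\smallskip

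\textbf{Step 1 (Reinterpretation of $Y^n$ as a rough solution).} Since $X^n$ is piecewise affine and $\mathbb{X}^{2,n}$ is defined by the classical Lebesgue integral (\ref{levy-area-appr}), the consistency statement in Proposition \ref{prop-int-gen} ensures that for every $i$,
\[
\int_s^t f_i(Y^n_u)\, \mathrm{d}\mathbb{X}^n_u\, g_i(Y^n_u)\,=\,\int_s^t f_i(Y^n_u)\,(X^n_u)'\,g_i(Y^n_u)\, du\ .
\]
Hence $Y^n$ coincides with the solution of (\ref{sys-base}) driven by $\mathbb{X}^n$ and starting at $A$, given by Theorem \ref{sol}. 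In particular $Y^n\in \cq_{X^n}^\ast([0,T])$, with decomposition $(\der Y^n)_{st}=\mathbf{Y}^{X^n,n}_s\sharp (\der X^n)_{st}+(Y^n)^\flat_{st}$ where $\mathbf{Y}^{X^n,n}_s=\sum_i f_i(Y^n_s)\otimes g_i(Y^n_s)$ (using Example \ref{ex-1}). The same holds for $Y$, with driver $\mathbb{X}$ and derived biprocess $\mathbf{Y}^{X}_s=\sum_i f_i(Y_s)\otimes g_i(Y_s)$.

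\smallskip

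\textbf{Step 2 (Uniform controlled-process bounds).} Thanks to (\ref{cond-conv}), the quantities $\cn[X^n;\cac_1^\ga]$ and $\cn[\mathbb{X}^{2,n};\cac_2^{2\ga}(\cl_T(\ca_\rightharpoonup))]$ are uniformly bounded in $n$. Revisiting the contraction-mapping argument that underlies Theorem \ref{sol} (see \cite[Section 4]{deya-schott}), one extracts uniform bounds on the $\ga$-H\"older norms of $Y^n$ and $\mathbf{Y}^{X^n,n}$ and on the $2\ga$-H\"older norm of $(Y^n)^\flat$, valid on a small interval $[0,\tau]$ with $\tau>0$ depending only on these bounds and on $\|f\|_3,\|g\|_3,\|A\|$.

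\smallskip

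\textbf{Step 3 (Continuity of the solution map on a short interval).} Subtract the two fixed-point identities defining $Y^n$ and $Y$ and estimate, on $[0,\tau]$,
\[
\cn\big[Y^n-Y;\cac_1^\ga\big]+\cn\big[\mathbf{Y}^{X^n,n}-\mathbf{Y}^X;\cac_1^\ga\big]+\cn\big[(Y^n)^\flat-Y^\flat;\cac_2^{2\ga}\big]\ .
\]
Each difference decomposes into: (a) a term in which the integrand differs but the driver is the same, controlled by Lipschitz-type estimates on $f_i,g_i$ via the tensor derivatives of Section \ref{subsec:funct}, hence by the very quantity we are estimating (with a factor $\tau^\ga$ at one's disposal); and (b) a term in which the integrand is fixed but the driver is $\mathbb{X}^n-\mathbb{X}$, controlled directly by (\ref{cond-conv}). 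Choosing $\tau$ small enough to absorb (a) by contraction yields convergence on $[0,\tau]$.

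\smallskip

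\textbf{Step 4 (Patching).} Using the stability property of Proposition \ref{prop-int-gen} and the fact that the construction of Theorem \ref{sol} restarts at $Y_\tau$ with $\|Y^n_\tau-Y_\tau\|\to 0$, iterate the above argument over a finite number of sub-intervals of size $\tau$ to cover $[0,T]$ and conclude that $\cn[Y^n-Y;\cac_1^\ga([0,T];\ca)]\to 0$.

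\smallskip

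The main obstacle is Step 3: the double dependence of the rough integral on both the controlled integrand (which itself involves $Y^n$ and $\mathbf{Y}^{X^n,n}$) and on the driver $\mathbb{X}^n$ forces a careful simultaneous estimation of three H\"older norms, with the non-commutative tensor-derivative machinery being indispensable to compare $f_i(Y^n_s)\otimes g_i(Y^n_s)$ with $f_i(Y_s)\otimes g_i(Y_s)$ at the level of controlled biprocesses, so as to keep the remainder $(\cdot)^\flat$-terms in the correct $2\ga$-H\"older class.
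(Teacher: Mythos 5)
This theorem is not proved in the paper at all: it is recalled verbatim from \cite[Theorem 4.17]{deya-schott}, so there is no internal proof to compare your attempt with, and the only meaningful benchmark is the argument of that reference, which (as with all Wong--Zaka\"i-type statements in rough paths theory) rests on the local Lipschitz continuity of the solution map in the driving rough path. Your Steps 1--4 reproduce exactly that strategy, so in approach you are in line with the cited source.

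As a proof, however, your text is an outline whose substance is deferred: Steps 2 and 3 assert the uniform a priori bounds and the joint continuity estimate rather than establish them, and one point in particular must be treated explicitly before ``subtracting the two fixed-point identities'' makes sense. The process $Y^n$ is controlled by $X^n$ while $Y$ is controlled by $X$, and these reference paths generate different filtrations; indeed $\ca^{X^n}_s$ is not contained in $\ca^X_s$, since $X^n_s$ already involves the future value $X_{t_{i+1}}$ of the partition interval containing $s$. So you need a framework in which controlled paths relative to two distinct drivers can be compared (a distance on pairs, together with a Lipschitz estimate of the rough integral jointly in the controlled integrand and in $(X,\mathbb{X}^2)$), and you must check that the hypothesis (\ref{cond-conv}), which measures $\mathbb{X}^{2,n}-\mathbb{X}^2$ only on $\ca_s\hat{\otimes}\ca_s=\ca^X_s\hat{\otimes}\ca^X_s$, really controls the terms where this difference is applied to biprocesses built from $Y^n$. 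Finally, the patching of Step 4 requires the local interval length $\tau$ and the bounds of Step 2 to be uniform in $n$ and in the restarted initial conditions $Y^n_\tau$; this is implicit in your sketch but is precisely the kind of quantitative statement the cited proof has to supply.
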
 

As we pointed it out in the introduction, these convergence results are based on Taylor-expansion procedures and accordingly, the consideration of an algebra norm for the control of $\bu$ and $L_{st}[\bu]$ in the roughness assumption (\ref{defi-norm-levy area}) is an essential ingredient.

\section{A product Lévy area above the $q$-Brownian motion}\label{sec:levy-area}

We go back here to the $q$-Bm setting described in Section \ref{sec:intro}. Namely, we fix $q\in [0,1)$ and consider a $q$-Brownian motion $(X_t)_{t\geq 0}$ in some non-commutative probability space $(\ca,\vp)$. With the developments of the previous section in mind, the route towards an efficient operator-norm calculus for $X$ is now clear: we need to exhibit a product Lévy area above $X$, in the sense of Definition \ref{defi:aire-gene}. Our main result thus reads as follows: 

\begin{theorem}\label{theo:exi-levy-area-qbm}
Denote by $\{X^n_t\}_{t\geq 0}$ the linear interpolation of $X$ along the dyadic partition $D^n:=\{t_i^n \, , \, i\geq 0\}$, $t_i^n:=\frac{i}{2^n}$. Then there exists a product Lévy area $\mathbb{X}^{2,S}$ above $X$, in the sense of Definition \ref{defi:aire-gene}, such that for every $T>0$ and every $0<\ga<1/2$, one has 
\begin{equation}\label{conv-theo}
X^n \to X \quad \text{in} \ \ \cac_1^\ga([0,T];\ca) \quad \text{and} \quad \mathbb{X}^{2,n} \to \mathbb{X}^{2,S} \quad \text{in} \ \ \cac_2^{2\ga}(\cl_T(\ca_\to)) \ ,
\end{equation}
where $\mathbb{X}^{2,n}$ is defined by (\ref{levy-area-appr}). We call $\mathbb{X}^{2,S}$ the Stratonovich product Lévy area above $X$. 
\end{theorem}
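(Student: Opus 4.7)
The natural approach is to construct $\mathbb{X}^{2,S}$ as the limit of the smooth approximations $\mathbb{X}^{2,n}$ defined by \eqref{levy-area-appr}, and so the whole task reduces to showing that $(X^n,\mathbb{X}^{2,n})$ is a Cauchy sequence in the space $\cac_1^\ga([0,T];\ca)\times \cac_2^{2\ga}(\cl_T(\ca_\to))$. The first convergence $X^n\to X$ in $\cac_1^\ga([0,T];\ca)$ is classical: since $X^n$ is the linear interpolation and $X$ is $1/2$-H\"older in operator norm by Proposition \ref{prop:base-q-bm}, standard Garsia-Rodemich-Rumsey type arguments (applied to the operator norm via \eqref{prop:norm-op}) yield the statement for every $\ga<1/2$. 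Moreover each $\mathbb{X}^{2,n}$ satisfies the product Chen identity \emph{exactly} with respect to $X^n$, as it is defined through a classical Lebesgue integral against a smooth path, so the product Chen identity with respect to $X$ will pass to the limit once the required convergences are established. The adaptedness of $\mathbb{X}^{2,S}_{st}$ to $\ca_t$ will be automatic as each $\mathbb{X}^{2,n}_{st}[\bu]$ lies in the unital subalgebra generated by $\{X_r:r\leq t\}$ and the projective-tensor arguments of $\bu$.

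\textbf{Heart of the proof.} The core step, and the main obstacle, is a quantitative operator-norm estimate of the form
\[
\sup_{\bu\in \ca_s\hat{\otimes}\ca_s,\,\bu\neq 0}\frac{\|\mathbb{X}^{2,m}_{st}[\bu]-\mathbb{X}^{2,n}_{st}[\bu]\|}{\|\bu\|}\leq c\cdot 2^{-\ep n}\,|t-s|^{2\ga}
\]
for some $\ep>0$ and all $m\geq n$. Because the test element $\bu\in \ca_s\hat{\otimes}\ca_s$ is arbitrary, one cannot simply plug in specific operators; instead one must view $\mathbb{X}^{2,n}_{st}-\mathbb{X}^{2,m}_{st}$ as a sum of elementary bilinear expressions of the form $V_1\,\der X_\al\,V_2\,\der X_\be$ with $V_1\otimes V_2$ derived from $\bu$. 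The standard route is then to move from operator norm to $L^{2p}(\vp)$-norm via $\|Y\|=\lim_{p\to\infty}\|Y\|_{L^{2p}(\vp)}$, and to bound $\vp\big((Z^\ast Z)^p\big)$ with $Z:=\mathbb{X}^{2,m}_{st}[\bu]-\mathbb{X}^{2,n}_{st}[\bu]$ using the $q$-Wick formula \eqref{form-q-gaussian}. This expansion produces a sum over pairings weighted by $q^{\mathrm{Cr}(\pi)}$ of products of covariances involving the times appearing in the dyadic grids.

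\textbf{Positivity and combinatorics.} This is where the assumption $q\in[0,1)$ plays its essential role (as announced in the discussion around (\ref{positivity-assumption})): since every weight $q^{\mathrm{Cr}(\pi)}$ is non-negative, one can bound the $q$-combinatorial sum term by term by the corresponding $q=1$ (or indeed $q=0$) sum, up to explicit constants depending only on $q$. The inner factors $\|V_i\|$ are controlled by $\|\bu\|$ via the projective tensor norm, and the remaining factor has the structure of a Gaussian moment of a product of increments, which by a telescoping argument between consecutive dyadic levels exhibits a geometric factor $2^{-\ep n}$ coming from the difference of the two time grids. Collecting the estimate, letting $p\to\infty$ and dividing by $\|\bu\|$ yields the operator-norm Cauchy bound uniformly in $\bu$.

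\textbf{Conclusion.} Given this Cauchy estimate at fixed $(s,t)$, the required H\"older-in-time control (i.e.\ the bound with the $|t-s|^{2\ga}$ weight on the simplex) follows from a Kolmogorov-Garsia type argument, exactly as in the commutative rough-path theory, now applied inside the Banach space $\cl(\ca_s\hat{\otimes}\ca_s,\ca_t)$. The limit $\mathbb{X}^{2,S}$ thus belongs to $\cac_2^{2\ga}(\cl_T(\ca_\to))$ and inherits the product Chen identity from $\mathbb{X}^{2,n}$ by passing to the limit, using that $\der X^n_{\cdot\cdot}\to \der X_{\cdot\cdot}$ uniformly and that the bilinear map $(\bu,V)\mapsto (\bu\sharp V)$ is jointly continuous on bounded sets. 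The convergences announced in \eqref{conv-theo} are precisely what the Cauchy estimate yields, completing the proof.
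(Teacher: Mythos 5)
Your overall strategy is the same as the paper's: telescope between consecutive dyadic levels, estimate the resulting sums in $L^{2r}(\vp)$ through the $q$-Wick formula (\ref{form-q-gaussian}), let $r\to\infty$ to reach the operator norm via (\ref{prop:norm-op}), pass from elementary tensors to general $\bu$ through the projective norm, and conclude by a completeness argument. But the decisive step is precisely the one you leave vague, and the mechanism you describe for it would fail. Bounding the $q$-weighted pairing sum \enquote{term by term by the corresponding $q=1$ (or indeed $q=0$) sum} does not work: all the covariance products appearing here are non-negative, so the $q=0$ sum is a \emph{lower} bound, not an upper bound; and replacing $q^{\mathrm{Cr}(\pi)}$ by $1$ keeps every pairing of all the variables, whose number grows factorially in $r$, so that after taking $2r$-th roots the bound diverges as $r\to\infty$ --- exactly the limit you must take to convert the $L^{2r}(\vp)$ estimate into an operator-norm estimate uniform in $\bu$. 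What actually closes the moment computation is: (i) the vanishing covariances between the two families of disjoint increments and between increments and past variables, which restrict the admissible pairings to those decomposing into three sub-pairings $\pi^1,\pi^2,\pi^3$ (even-indexed increments among themselves, odd-indexed ones among themselves, past variables among themselves); (ii) the superadditivity $\mathrm{Cr}(\pi)\geq \mathrm{Cr}(\pi^1)+\mathrm{Cr}(\pi^2)+\mathrm{Cr}(\pi^3)$, which together with $0\leq q<1$ gives $q^{\mathrm{Cr}(\pi)}\leq q^{\mathrm{Cr}(\pi^1)}q^{\mathrm{Cr}(\pi^2)}q^{\mathrm{Cr}(\pi^3)}$ --- this, and not the mere non-negativity of the weights, is where the positivity assumption enters --- so that the sum factorizes; and (iii) the re-summation of the $\pi^3$-factor, by the $q$-Wick formula read backwards, into exactly $\vp\big(\big|\sum_j U_jV_j\big|^{2r}\big)\leq \big\|\sum_j U_jV_j\big\|^{2r}$, while the $\pi^1,\pi^2$ factors are controlled by $2^{-2r(n+1)}\vp(|X_1|^{2r})^2$ times the count of index choices compatible with the diagonal constraints, which produces the weight $|t-s|^{4r\ga}2^{4r\ga n}$ by an interpolation of exponents. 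All three $2r$-th roots stay bounded in $r$, which is what allows the passage to the operator norm. Without (i)--(iii), or an equivalent, your sketch does not yield the claimed Cauchy bound, so this is a genuine gap at the heart of the proof.

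Two smaller points. The Kolmogorov--Garsia step you invoke is unnecessary and slightly misleading: all estimates here are deterministic operator-norm bounds that already carry the weight $|t-s|^{2\ga}$ uniformly on the simplex, so one concludes directly from the completeness of $\cac_2^{2\ga}(\cl_T(\ca_{\rightharpoonup}))$ (Lemma \ref{lem:complete-space}); likewise the first-order convergence requires no Garsia--Rodemich--Rumsey argument, only the deterministic $1/2$-H\"older bound (\ref{q-bm-holder}). Finally, adaptedness of the limit is not \enquote{automatic}: $\mathbb{X}^{2,n}_{st}[\bu]$ involves the interpolation $X^n$ on the last dyadic interval containing $t$, hence the value of $X$ at a time strictly larger than $t$, so it need not belong to $\ca_t$; one needs the short extra approximation argument (comparing with the integral stopped at the last dyadic point $t^n_\ell\leq t$) that the paper gives at the end of its proof.
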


Based on this result, the conclusions of Proposition \ref{prop-int-gen}, Theorem \ref{sol}, Proposition \ref{pro-approx-1} and Theorem \ref{theo-approx} can all be applied to the $q$-Brownian motion, with limits understood as rough integrals with respect to the \enquote{product rough path} $\mathbb{X}^S:=(X,\mathbb{X}^{2,S})$. The Stratonovich terminology is here used as a reference to the classical commutative situation, where the (almost sure) limit of the sequence of approximated Lévy areas would indeed coincide with the Stratonovich iterated integral (see also Corollary \ref{coro:identif-strato} for another justification of this terminology).

\smallskip

Before we turn to the proof of Theorem \ref{theo:exi-levy-area-qbm}, let us recall that the whole difficulty in constructing a stochastic integral with respect to the general $q$-Bm, in comparison with the free ($q=0$) or the commutative ($q\to 1$) cases, lies in the absence of any satisfying \enquote{$q$-freeness} property for the increments of the process when $q\in (0,1)$ (as reported by Speicher in \cite{speicher}). For instance, if $s<u<t$,
$$
\vp\big((X_u-X_s)(X_t-X_u)(X_u-X_s)(X_t-X_u)\big) =q\,  \vp\big((X_u-X_s)^2\big)\vp\big((X_t-X_u)^2\big)=q\, |u-s| |t-u|  \ ,
$$
which shows that, for $q\neq 0$, the disjoint increments of a $q$-Brownian motion $\{X_t\}_{t\geq 0}$ are indeed not freely independent (in the sense of \cite[Definition 2.6]{deya-schott}), making most of the arguments of \cite{biane-speicher} unexploitable in this situation.

\smallskip

This being said, we can still rely here on the basic fact that for all $q\in [0,1)$, $\vp\big((X_u-X_s)(X_t-X_u)\big)=0$. Together with Formula (\ref{form-q-gaussian}), this very weak \enquote{freeness} property of the increments will somehow be sufficient for our purpose, the construction of a product Lévy area being much more specific than the construction of a general stochastic integral (along Itô's standard procedure).

\

The proof of Theorem \ref{theo:exi-levy-area-qbm} will also appeal to the two following elementary lemmas. The first one (whose proof follows immediately from (\ref{form-q-gaussian})) is related to the linear stability of $q$-Gaussian families:

\begin{lemma}\label{lem:stabi-q-gauss}
For any fixed $q\in [0,1)$, let $Y:=\{Y_1,\ldots,Y_d\}$ be a $q$-Gaussian vector in some non-commutative probability space $(\ca,\vp)$, and consider a real-valued $(d\times m)$-matrix $\Lambda$. Then $Z:=\Lambda Y$ is also a $q$-Gaussian vector in $(\ca,\vp)$.
\end{lemma}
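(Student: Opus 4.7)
The plan is to verify that $Z = \Lambda Y$ satisfies the defining Wick-type identity \eqref{form-q-gaussian} directly, by multilinear expansion. Concretely, write $Z_j = \sum_{i=1}^d \Lambda_{ij} Y_i$ for $j=1,\ldots,m$, and fix indices $j_1,\ldots,j_r \in \{1,\ldots,m\}$. Since $\vp$ is linear and multiplication in $\ca$ distributes over sums, we immediately get
\[
\vp\big(Z_{j_1} \cdots Z_{j_r}\big) = \sum_{i_1,\ldots,i_r = 1}^{d} \Lambda_{i_1 j_1} \cdots \Lambda_{i_r j_r} \, \vp\big(Y_{i_1} \cdots Y_{i_r}\big).
\]
If $r$ is odd, the $q$-Gaussian hypothesis on $Y$ forces every $\vp(Y_{i_1}\cdots Y_{i_r})$ to vanish (no pairing of an odd set exists), so $\vp(Z_{j_1}\cdots Z_{j_r}) = 0$, which matches the empty sum in \eqref{form-q-gaussian}. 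It therefore suffices to treat the case of even $r$.

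For even $r$, I would apply \eqref{form-q-gaussian} to each $\vp(Y_{i_1}\cdots Y_{i_r})$ and interchange the finite sums, obtaining
\[
\vp\big(Z_{j_1}\cdots Z_{j_r}\big) = \sum_{\pi \in \cp_2(r)} q^{\mathrm{Cr}(\pi)} \sum_{i_1,\ldots,i_r} \Big(\prod_{k=1}^{r} \Lambda_{i_k j_k}\Big) \prod_{\{p,q\}\in \pi} \vp\big(Y_{i_p} Y_{i_q}\big).
\]
The key observation is that, for each fixed pairing $\pi$, the $r$ indices $i_1,\ldots,i_r$ are partitioned into pairs by $\pi$, so the inner sum factorises over the blocks of $\pi$: for a block $\{p,q\}$ only the variables $i_p,i_q$ appear, weighted by $\Lambda_{i_p j_p}\Lambda_{i_q j_q}\vp(Y_{i_p}Y_{i_q})$. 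Summing those two indices independently yields, by bilinearity,
\[
\sum_{i_p,i_q} \Lambda_{i_p j_p}\Lambda_{i_q j_q}\vp\big(Y_{i_p}Y_{i_q}\big) = \vp\Big(\sum_{i_p}\Lambda_{i_p j_p}Y_{i_p} \cdot \sum_{i_q}\Lambda_{i_q j_q}Y_{i_q}\Big) = \vp\big(Z_{j_p}Z_{j_q}\big).
\]
Taking the product over all blocks $\{p,q\}\in \pi$ gives exactly $\prod_{\{p,q\}\in\pi}\vp(Z_{j_p}Z_{j_q})$, hence
\[
\vp\big(Z_{j_1}\cdots Z_{j_r}\big) = \sum_{\pi\in \cp_2(r)} q^{\mathrm{Cr}(\pi)} \prod_{\{p,q\}\in\pi}\vp\big(Z_{j_p}Z_{j_q}\big),
\]
which is precisely \eqref{form-q-gaussian} for $Z$.

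There is essentially no obstacle here: the argument is purely combinatorial bookkeeping and uses nothing beyond the multilinearity of the trace and of the product, together with the fact that the $q$-Gaussian formula is itself multilinear in the underlying variables (the weight $q^{\mathrm{Cr}(\pi)}$ depends only on the pairing $\pi$, not on the indices). The same proof would work for any Wick-type formula in which the right-hand side is a multilinear functional of the covariance, which is really why $q$-Gaussianity is stable under arbitrary real linear transformations.
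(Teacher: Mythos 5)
Your proof is correct and is exactly the argument the paper has in mind: the paper gives no written proof, remarking only that the lemma ``follows immediately from (\ref{form-q-gaussian})'', and your multilinear expansion --- with the inner sum factorising over the blocks of each fixed pairing $\pi$ because the weight $q^{\mathrm{Cr}(\pi)}$ is independent of the summation indices --- is precisely that immediate verification, including the correct treatment of odd $r$ via the empty set of pairings.
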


We will also need the following general topology property on the space accommodating any Lévy area:

\begin{lemma}\label{lem:complete-space}
The space $\cac_2^\la(\cl_T(\ca_{\rightharpoonup}))$, endowed with the norm (\ref{defi-norm-levy area}), is complete.
\end{lemma}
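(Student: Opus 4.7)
The proof is a routine completeness argument, so the plan is just to execute carefully the three standard steps: extract a candidate limit pointwise, upgrade it to an element of the target space, and verify convergence in the Hölder norm.

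Starting with a Cauchy sequence $(L^n)_{n\geq 1}$ in $\cac_2^\la(\cl_T(\ca_{\rightharpoonup}))$, I would first note that for every fixed pair $s<t$ in $[0,T]$ and every fixed $\bu\in \ca_s\hat{\otimes} \ca_s$, the very definition of the norm \eqref{defi-norm-levy area} gives
$$\|L^n_{st}[\bu]-L^m_{st}[\bu]\|\leq \cn[L^n-L^m;\cac_2^\la(\cl_T(\ca_{\rightharpoonup}))]\,|t-s|^\la \|\bu\|,$$
so $(L^n_{st}[\bu])_n$ is Cauchy in the Banach space $\ca$. Denote its limit by $L_{st}[\bu]$, which defines a family $L=(L_{st})_{s<t}$ of maps $\ca_s \hat{\otimes}\ca_s \to \ca$.

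Next I would check that each $L_{st}$ belongs to $\cl(\ca_s\hat{\otimes}\ca_s, \ca)$. Linearity is inherited from the $L^n_{st}$ through pointwise convergence. For boundedness, the triangle inequality applied to the Cauchy estimate above gives, for all $n$ large enough,
$$\|L^n_{st}\|_{\text{op}}\leq \|L^1_{st}\|_{\text{op}}+\bigl(\sup_{m\geq 1}\cn[L^m-L^1;\cac_2^\la(\cl_T(\ca_{\rightharpoonup}))]\bigr)T^\la,$$
and this bound is uniform in $n$, so passing to the limit in $\|L^n_{st}[\bu]\|\leq \|L^n_{st}\|_{\text{op}}\|\bu\|$ yields the desired boundedness of $L_{st}$, hence $L\in \cl_T(\ca_{\rightharpoonup})$.

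Finally, to conclude convergence in norm (and simultaneously membership of $L$ in $\cac_2^\la(\cl_T(\ca_{\rightharpoonup}))$), I would fix $\ep>0$ and pick $N$ such that $\cn[L^n-L^m;\cac_2^\la(\cl_T(\ca_{\rightharpoonup}))]\leq \ep$ for all $n,m\geq N$. Then, for every $(s,t)$ and every non-zero $\bu\in \ca_s \hat{\otimes}\ca_s$, letting $m\to \infty$ in
$$\frac{\|L^n_{st}[\bu]-L^m_{st}[\bu]\|}{|t-s|^\la \|\bu\|}\leq \ep$$
gives the same bound for $\|L^n_{st}[\bu]-L_{st}[\bu]\|/(|t-s|^\la \|\bu\|)$. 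Taking the supremum over $(s,t)$ and $\bu$ shows $\cn[L^n-L;\cac_2^\la(\cl_T(\ca_{\rightharpoonup}))]\leq \ep$ for $n\geq N$. This proves both that $L-L^N$ (and hence $L$) lies in $\cac_2^\la(\cl_T(\ca_{\rightharpoonup}))$, and that $L^n\to L$ in norm.

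There is no genuine obstacle here — the only mild point of attention is that the domain $\ca_s\hat{\otimes}\ca_s$ of each $L_{st}$ depends on $s$, but since every step of the argument is carried out pointwise in $(s,t)$ this causes no difficulty, and completeness of $\ca$ (via the completeness of each operator space $\cl(\ca_s\hat{\otimes}\ca_s,\ca)$) does all the work.
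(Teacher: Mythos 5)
Your proof is correct and follows essentially the same route as the paper's: extract a candidate limit pointwise (the paper does this via the auxiliary spaces $L^\infty([s,T];\cl(\ca_s\hat{\otimes}\ca_s,\ca))$, you do it directly in $(s,t,\bu)$, which is an immaterial difference), then pass to the limit in the uniform Cauchy estimate to get both membership in $\cac_2^\la(\cl_T(\ca_{\rightharpoonup}))$ and norm convergence. No gaps.
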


\begin{proof}
Although the arguments are classical, let us provide a few details here, since the $\cac_2^\la(\cl_T(\ca_{\rightharpoonup}))$-structure is not exactly standard. 

\smallskip

Consider a Cauchy sequence $L^n$ in $\cac_2^\la(\cl_T(\ca_{\rightharpoonup}))$. For every fixed $s\in [0,T]$, the sequence $L^n_{s.}$ defines a Cauchy sequence in the space $L^\infty([s,T]; \cl(\ca_s \hat{\otimes}\ca_s,\ca))$ of bounded functions on $[s,T]$ (with values in $\cl(\ca_s \hat{\otimes}\ca_s,\ca)$), endowed with the uniform norm. Therefore it converges in the latter space to some function $L_{s.}$. The fact that the so-defined family $\{L_{st}\}_{s<t}$ belongs to $\cac_2^\la(\cl_T(\ca_{\rightharpoonup}))$ is an immediate consequence of the boundedness of $L^n$ in $\cac_2^\la(\cl_T(\ca_{\rightharpoonup}))$. Finally, given $\varepsilon >0$ and for all fixed $s<t$, we know that there exists $M_{\varepsilon,s,t}\geq 0$ such that for all $m\geq M_{\varepsilon,s,t}$, $\|L^m_{st}-L_{st}\|_{\cl(\ca_s \hat{\otimes}\ca_s,\ca)}\leq \frac{\varepsilon}{2} |t-s|^\la$. On the other hand, there exists $N_\varepsilon \geq 0$ such that for all $n,m\geq N_\varepsilon$ and all $s<t$, $\|L^n_{st}-L^m_{st}\|_{\cl(\ca_s \hat{\otimes}\ca_s,\ca)}\leq \frac{\varepsilon}{2} |t-s|^\la$. Therefore, for all $n\geq N_\varepsilon$ and all $s<t$, we get that for $m:=\max(N_\varepsilon,M_{\varepsilon,s,t})$,
$$\|L^n_{st}-L_{st}\|_{\cl(\ca_s \hat{\otimes}\ca_s,\ca)}\leq \|L^n_{st}-L^m_{st}\|_{\cl(\ca_s \hat{\otimes}\ca_s,\ca)}+\|L_{st}-L^m_{st}\|_{\cl(\ca_s \hat{\otimes}\ca_s,\ca)}\leq  \varepsilon |t-s|^\la\ ,$$
and so $L^n\to L$ in $\cac_2^\la(\cl_T(\ca_{\rightharpoonup}))$, which achieves to prove that the latter space is complete.
\end{proof}

\

\begin{proof}[Proof of Theorem \ref{theo:exi-levy-area-qbm}]
Throughout the proof, we will denote by $A\lesssim B$ any bound of the form $A\leq c B$, where $c$ is a universal constant independent from the parameters under consideration. The first-order convergence statement in (\ref{conv-theo}) is a straightforward consequence of the $1/2$-Hölder regularity of $X$. In fact, using (\ref{q-bm-holder}), it can be checked that for all $n\geq 0$ and $s<t$, 
\begin{equation}\label{fir-or-pr}
\| \delta X^n_{st}\|\lesssim \|X_1\|  |t-s|^{1/2}  \quad \text{and} \quad \| \delta(X^n-X)_{st}\|\lesssim \|X_1\|  |t-s|^{\gamma} 2^{-n(1/2-\gamma)}\ .
\end{equation}
Let us turn to the second-order convergence and to this end, fix $n\geq 0$ and $s<t$ such that $t_k^n \leq s<t_{k+1}^n$, $t_\ell^n \leq t < t_{\ell+1}$, with $k\leq \ell$. If $|\ell-k|\leq 1$, or in other words if $|t-s|\leq 2^{-n+1}$, the expected bound can be readily derived from the first estimate in (\ref{fir-or-pr}), that is for every $\bu\in \ca_s \hat{\otimes}\ca_s$, we get from (\ref{fir-or-pr})
$$\|\mathbb{X}^{2,n+1}_{st}[\bu]-\mathbb{X}^{2,n}_{st}[\bu]\|\leq \|\mathbb{X}^{2,n+1}_{st}[\bu]\|+\|\mathbb{X}^{2,n}_{st}[\bu]\| \lesssim \|X_1\|^2  |t-s|^{2\gamma} 2^{-n(1/2-\gamma)} \ . $$
Assume from now on that $\ell \geq k+2$ and in this case consider the decomposition, for every $\bu\in \ca_s \hat{\otimes}\ca_s$,
\begin{eqnarray}
\lefteqn{\mathbb{X}^{2,n+1}_{st}[\bu]-\mathbb{X}^{2,n}_{st}[\bu]}\nonumber\\
&=& \bigg[ \int_{t_{k+1}^n}^{t_\ell^n} \big( \bu \sharp \delta X^{n+1}_{t_{k+1}^n u}\big) \, \mathrm{d}X^{n+1}_u-\int_{t_{k+1}^n}^{t_\ell^n} \big( \bu \sharp \delta X^{n}_{t_{k+1}^nu}\big) \, \mathrm{d}X^{n}_u \bigg]\nonumber\\
& & + \bigg[ \int_s^{t_{k+1}^n}  \big( \bu \sharp \delta X^{n+1}_{su}\big) \, \mathrm{d}X^{n+1}_u +\int_{t_\ell^n}^{t} \big( \bu \sharp \delta X^{n+1}_{su}\big) \, \mathrm{d}X^{n+1}_u -\int_s^{t_{k+1}^n} \big( \bu \sharp \delta X^{n}_{su}\big) \, \mathrm{d}X^{n}_u\nonumber \\
& & \hspace{9cm}-\int_{t_\ell^n}^{t} \big( \bu \sharp \delta X^{n}_{su}\big) \, \mathrm{d}X^{n}_u\bigg]\nonumber\\
& & +\bigg[\int_{t_{k+1}^n}^{t_\ell^n} \big( \bu \sharp \delta X^{n+1}_{st_{k+1}^n}\big) \, \mathrm{d}X^{n+1}_u-\int_{t_{k+1}^n}^{t_\ell^n} \big( \bu \sharp \delta X^{n}_{st_{k+1}^n}\big) \, \mathrm{d}X^{n}_u    \bigg] \ .\label{starting-decompo-order-two}
\end{eqnarray}
The \enquote{boundary} integrals within the second and third brackets can again be bounded individually using the first estimate in (\ref{fir-or-pr}) only. For instance, 
\begin{eqnarray*}
\lefteqn{\Big\| \int_{t_\ell^n}^{t} \big( \bu \sharp \delta X^{n+1}_{su}\big) \, \mathrm{d}X^{n+1}_u\Big\|}\\
&\lesssim &  \|X_1\| \|\bu\|\bigg[   \1_{\{t_{2\ell}^{n+1} \leq t< t_{2\ell+1}^{n+1}\}} \int_{t_\ell^n}^t |s-u|^{1/2}  \big( 2^{n+1} \|\delta X_{t_{2\ell}^{n+1}t_{2\ell+1}^{n+1}}\|\big) \\
& &\hspace{1.5cm} +\1_{\{t_{2\ell+1}^{n+1} \leq t< t_{2\ell+2}^{n+1}\}}\int_{t_{2\ell}^{n+1}}^{t_{2\ell+1}^{n+1}} |s-u|^{1/2}   \big( 2^{n+1} \|\delta X_{t_{2\ell}^{n+1}t_{2\ell+1}^{n+1}}\|\big)\\
& &\hspace{1.5cm} +\1_{\{t_{2\ell+1}^{n+1} \leq t< t_{2\ell+2}^{n+1}\}}\int_{t_{2\ell+1}^{n+1}}^t |s-u|^{1/2}  \big( 2^{n+1} \|\delta X_{t_{2\ell+1}^{n+1}t_{2\ell+2}^{n+1}}\|\big)\bigg]\\
&\lesssim & \|X_1\|^2 \|\bu\| |t-s|^{2\ga} 2^{-n(1/2-\ga)} \ .
\end{eqnarray*}

\smallskip

Therefore, we only have to focus on the first bracket in decomposition (\ref{starting-decompo-order-two}). In fact, noting that
$$\int_{t_i^n}^{t_{i+1}^n}\big( \bu \sharp \delta X^{n}_{t_{i}^nu}\big) \, \mathrm{d}X^{n}_u =\frac12 \big( \bu \sharp \delta X_{t_i^n t_{i+1}^n} \big) \, \delta X_{t_i^n t_{i+1}^n} \ ,$$
we get 
\begin{eqnarray}
\lefteqn{\int_{t_{k+1}^n}^{t_\ell^n} \big( \bu \sharp \delta X^{n+1}_{t_{k+1}^n u}\big) \, \mathrm{d}X^{n+1}_u-\int_{t_{k+1}^n}^{t_\ell^n} \big( \bu \sharp \delta X^{n}_{t_{k+1}^nu}\big) \, \mathrm{d}X^{n}_u} \nonumber\\
&=& \sum_{i=k+1}^{\ell-1} \bigg\{\int_{t_{2i}^{n+1}}^{t_{2i+1}^{n+1}} \big( \bu \sharp \delta X^{n+1}_{t_{k+1}^n u}\big) \, \mathrm{d}X^{n+1}_u+\int_{t_{2i+1}^{n+1}}^{t_{2i+2}^{n+1}} \big( \bu \sharp \delta X^{n+1}_{t_{k+1}^n u}\big) \, \mathrm{d}X^{n+1}_u \nonumber\\
& &\hspace{8cm}-\int_{t_{i}^n}^{t_{i+1}^n} \big( \bu \sharp \delta X^{n}_{t_{k+1}^nu}\big) \, \mathrm{d}X^{n}_u \bigg\}\nonumber\\
&=& \sum_{i=k+1}^{\ell-1} \bigg\{\bigg[\int_{t_{2i}^{n+1}}^{t_{2i+1}^{n+1}} \big( \bu \sharp \delta X^{n+1}_{t_{2i}^{n+1} u}\big) \, \mathrm{d}X^{n+1}_u+\int_{t_{2i+1}^{n+1}}^{t_{2i+2}^{n+1}} \big( \bu \sharp \delta X^{n+1}_{t_{2i+1}^{n+1} u}\big) \, \mathrm{d}X^{n+1}_u \nonumber\\
& &\hspace{8cm}-\int_{t_{i}^n}^{t_{i+1}^n} \big( \bu \sharp \delta X^{n}_{t_{i}^nu}\big) \, \mathrm{d}X^{n}_u\bigg]\nonumber\\
& &\hspace{2cm} +\Big[ \big( \bu \sharp \delta X_{t_{k+1}^n t_{2i}^{n+1}} \big) \, \delta X_{t_{2i}^{n+1} t_{2i+1}^{n+1}}+\big( \bu \sharp \delta X_{t_{k+1}^n t_{2i+1}^{n+1}} \big) \, \delta X_{t_{2i+1}^{n+1} t_{2i+2}^{n+1}}\nonumber \\
& &\hspace{8cm}-\big( \bu \sharp \delta X_{t_{k+1}^n t_{2i}^{n+1}} \big) \, \delta X_{t_{2i}^{n+1} t_{2i+2}^{n+1}} \Big] \bigg\}\nonumber\\
&=& \sum_{i=k+1}^{\ell-1} \bigg\{\frac12\bigg[\big( \bu \sharp \delta X_{t_{2i}^{n+1} t_{2i+1}^{n+1}} \big) \, \delta X_{t_{2i}^{n+1} t_{2i+1}^{n+1}}+\big( \bu \sharp \delta X_{t_{2i+1}^{n+1}t_{2i+2}^{n+1}} \big)\delta X_{t_{2i+1}^{n+1}t_{2i+2}^{n+1}} \nonumber\\
& &\hspace{7cm}-\big( \bu \sharp \delta X_{t_{2i}^{n+1}t_{2i+2}^{n+1}} \big) \delta X_{t_{2i}^{n+1}t_{2i+2}^{n+1}} \bigg]\nonumber\\
& &\hspace{2.5cm} +\Big[ -\big( \bu \sharp \delta X_{t_{k+1}^n t_{2i}^{n+1}} \big) \, \delta X_{t_{2i+1}^{n+1} t_{2i+2}^{n+1}}+\big( \bu \sharp \delta X_{t_{k+1}^n t_{2i+1}^{n+1}} \big) \, \delta X_{t_{2i+1}^{n+1} t_{2i+2}^{n+1}}\Big]\bigg\}\nonumber \\
&=& \sum_{i=k+1}^{\ell-1} \bigg\{\frac12\bigg[-\big( \bu \sharp \delta X_{t_{2i}^{n+1} t_{2i+1}^{n+1}} \big) \, \delta X_{t_{2i+1}^{n+1} t_{2i+2}^{n+1}}-\big( \bu \sharp \delta X_{t_{2i+1}^{n+1}t_{2i+2}^{n+1}} \big)\delta X_{t_{2i}^{n+1}t_{2i+1}^{n+1}} \bigg]\nonumber\\
& &\hspace{7cm} +\Big[ \big( \bu \sharp \delta X_{t_{2i}^{n+1}t_{2i+1}^{n+1}} \big) \, \delta X_{t_{2i+1}^{n+1} t_{2i+2}^{n+1}}\Big]\bigg\}\nonumber \\
&=&\frac12 \sum_{i=k+1}^{\ell-1} \bigg[ \Big( \bu \sharp (\delta X)_{t_{2i}^{n+1}t_{2i+1}^{n+1}} \Big) \, (\delta X)_{t_{2i+1}^{n+1}t_{2i+2}^{n+1}}-\Big( \bu \sharp (\delta X)_{t_{2i+1}^{n+1}t_{2i+2}^{n+1}} \Big) \, (\delta X)_{t_{2i}^{n+1}t_{2i+1}^{n+1}} \bigg]  .\label{main-term-area}
\end{eqnarray}
Let us bound the two sums
$$S^{1,n}_{st}[\bu]:= \sum_{i=k+1}^{\ell-1} \Big( \bu \sharp (\delta X)_{t_{2i}^{n+1}t_{2i+1}^{n+1}} \Big) \, (\delta X)_{t_{2i+1}^{n+1}t_{2i+2}^{n+1}}$$
and
$$S^{2,n}_{st}[\bu]:=\sum_{i=k+1}^{\ell-1} \Big( \bu \sharp (\delta X)_{t_{2i+1}^{n+1}t_{2i+2}^{n+1}} \Big) \, (\delta X)_{t_{2i}^{n+1}t_{2i+1}^{n+1}}$$
separately.

\smallskip

\noindent
Consider first the case where $\bu=\sum_{j=1}^o U_j \otimes V_j$, with 
$$U_j:=X_{s^j_1} \cdots X_{s^j_{m_j}} \quad , \quad V_j:=X_{s^j_{m_j+1}} \cdots X_{s^j_{m_j+p_j}} \ ,$$
and $s^j_p \leq s$ for all $j,p$. Besides, let us set $Y_i=Y_{i,n}:=(\delta X)_{t_{i}^{n+1}t_{i+1}^{n+1}}$. With these notations, and for every $r\geq 1$, we have
\begin{eqnarray}
\lefteqn{\vp\big( |S^{1,n}_{st}[\bu]|^{2r} \big) \ =\ \vp\bigg( \bigg[ \bigg( \sum_{i_1}\sum_{j_1} U_{j_1}Y_{2i_1}V_{j_1}Y_{2i_1+1}\bigg) \bigg( \sum_{i_2} \sum_{j_2} U_{j_2}Y_{2i_2}V_{j_2}Y_{2i_2+1}\bigg)^\ast \bigg]^r\bigg)}\nonumber\\
&=& \sum_{i_1,\ldots,i_{2r}}\sum_{j_1,\ldots,j_{2r}} \vp\Big( \big[ U_{j_1}Y_{2i_1}V_{j_1}Y_{2i_1+1}Y_{2i_2+1} V_{j_2}^\ast Y_{2i_2}U_{j_2}^\ast\big]\cdots\nonumber\\
& & \hspace{3cm}\big[ U_{j_{2r-1}}Y_{2i_{2r-1}}V_{j_{2r-1}}Y_{2i_{2r-1}+1}Y_{2i_{2r}+1} V_{j_{2r}}^\ast Y_{2i_{2r}}U_{j_{2r}}^\ast\big] \Big) \ ,\label{joint-moments-proof}
\end{eqnarray}
where each index $i$ runs over $\{k+1,\ldots,\ell-1\}$ and each index $j$ runs overs $\{1,\ldots,o\}$.
At this point, observe that for all fixed $\mathbf{i}:=(i_1,\ldots,i_{2r})$ and $\mathbf{j}:=(j_1,\ldots,j_{2r})$, the family 
$$\{X_{s^{j}_1},\dots,X_{s^{j}_{m_j+p_j}},Y_{2i},Y_{2i+1}, \ i\in \{i_1,\ldots,i_{2r}\}, \ j\in \{j_1,\ldots,j_{2r}\} \}$$
is a $q$-Gaussian family (due to Lemma \ref{lem:stabi-q-gauss}) and accordingly the associated joint moments obey Formula (\ref{form-q-gaussian}). Besides, we have trivially
$$\vp\big( Y_{2i_a}Y_{2i_b+1}\big)=0 \quad , \quad \vp\big( Y_{2i_a}Y_{2i_b}\big)=\vp\big( Y_{2i_a+1}Y_{2i_b+1}\big)=\1_{\{i_a=i_b\}}2^{-(n+1)} \vp\big( |X_1|^2)$$
and
$$\vp\big( Y_{2i} X_{s_a^j})=\vp\big( Y_{2i+1} X_{s_a^j})=0 \ .$$
Using these basic observations and going back to (\ref{joint-moments-proof}), it is clear that, when applying Formula (\ref{form-q-gaussian}) to the expectation in (\ref{joint-moments-proof}), we can restrict the sum to the set of pairings $\pi\in \mathcal{P}_2(\{1,\ldots,N_r\})$ ($N_r:=2\big[(m_{j_1}+p_{j_1})+\ldots +(m_{j_{2r}}+p_{j_{2r}})]+8r$) that decompose - in a unique way - as a combination of three sub-pairings, namely: 1) a pairing $\pi^1\in \mathcal{P}_2(\{1,\ldots,2r\})$ that connects the random variables $\{Y_{2i}\}$ to each other; 2) a pairing $\pi^2\in \mathcal{P}_2(\{1,\ldots,2r\})$ that connects the random variables $\{Y_{2i+1}\}$ to each other; 3) a pairing $\pi^3\in \mathcal{P}_2(\{1,\ldots,N'_r\})$ ($N'_r:=2\big[(m_{j_1}+p_{j_1})+\ldots +(m_{j_{2r}}+p_{j_{2r}})]$) that connects the random variables $\{X_{s_i^j}\}$ to each other. Moreover, with this decomposition in mind, one has clearly
$$\text{Cr}(\pi)\geq \text{Cr}(\pi^1)+\text{Cr}(\pi^2)+\text{Cr}(\pi^3) \ .$$
Consequently, it holds that for all fixed $\mathbf{i}:=(i_1,\ldots,i_{2r})$ and $\mathbf{j}:=(j_1,\ldots,j_{2r})$,
\begin{eqnarray}
\lefteqn{\Big|\vp\Big( \big[ U_{j_1}Y_{2i_1}V_{j_1}Y_{2i_1+1}Y_{2i_2+1} V_{j_2}^\ast Y_{2i_2}U_{j_2}^\ast\big]\cdots}\nonumber\\
& &\hspace{4cm}\big[ U_{j_{2r-1}}Y_{2i_{2r-1}}V_{j_{2r-1}}Y_{2i_{2r-1}+1}Y_{2i_{2r}+1} V_{j_{2r}}^\ast Y_{2i_{2r}}U_{j_{2r}}^\ast\big] \Big)\Big|\nonumber\\
&\leq &\sum_{\substack{\pi^1,\pi^2\in \mathcal{P}_2(\{1,\ldots,2r\}) \\ \pi^3\in \mathcal{P}_2(\{1,\ldots,N'_r\})}} q^{\text{Cr}(\pi^1)+\text{Cr}(\pi^2)+\text{Cr}(\pi^3)} \nonumber\\
& &\hspace{1cm}\prod_{\{a,b\}\in \pi^1} \vp\big(Y_{2i_a}Y_{2i_b}\big) \1_{\{i_a=i_b\}}\prod_{\{c,d\}\in \pi^2} \vp\big( Y_{2i_c+1}Y_{2i_d+1}\big)\1_{\{i_c=i_d\}} \prod_{\{e,f\}\in \pi^3} \vp\big( Z^{\mathbf{j}}_e Z^{\mathbf{j}}_f \big)\label{positivity-assumption}\\
&\leq& 2^{-2r(n+1)} \vp\big( |X_1|^2)^{2r}\bigg( \sum_{\pi^1\in\mathcal{P}_2(\{1,\ldots,2r\})}q^{\text{Cr}(\pi^1)}\prod_{\{a,b\}\in \pi^1} \1_{\{i_a=i_b\}}\bigg)\nonumber\\
& &\bigg( \sum_{\pi^2\in\mathcal{P}_2(\{1,\ldots,2r\})}q^{\text{Cr}(\pi^2)}\prod_{\{c,d\}\in \pi^2}\1_{\{i_c=i_d\}}\bigg)\bigg(\sum_{\pi^3\in \mathcal{P}_2(\{1,\ldots,N'_r\})}q^{\text{Cr}(\pi^3)}\prod_{\{e,f\}\in \pi^3} \vp\big( Z^{\mathbf{j}}_e Z^{\mathbf{j}}_f \big)\bigg)  \ ,\nonumber
\end{eqnarray}
where $Z^{\mathbf{j}}$ stands for the natural reordering of the variables $\{X_{s_m^j}\}$, namely for all $a\in \{1,\ldots,2r\}$ and $b\in \{1,\ldots,m_{j_{a}}+p_{j_{a}}\}$,
$$Z^{\mathbf{j}}_{2[(m_{j_1}+p_{j_1})+\ldots+(m_{j_{a-1}}+p_{j_{a-1}})]+b}=Z^{\mathbf{j}}_{2[(m_{j_1}+p_{j_1})+\ldots+(m_{j_{a-1}}+p_{j_{a-1}})]+[2(m_{j_{a}}+p_{j_{a}})-b]}:=X_{s_b^{j_a}} \ .$$
As a result, the double sum in (\ref{joint-moments-proof}) is bounded by 
\begin{eqnarray}
\lefteqn{2^{-2rn} \vp\big( |X_1|^2)^{2r}\bigg( \sum_{\pi^1\in\mathcal{P}_2(\{1,\ldots,2r\})}q^{\text{Cr}(\pi^1)}\sum_{i_1,\ldots,i_{2r}=k+1}^{\ell-1}\prod_{\{a,b\}\in \pi^1} \1_{\{i_a=i_b\}}\bigg)}\nonumber\\
& &\bigg( \sum_{\pi^2\in\mathcal{P}_2(\{1,\ldots,2r\})}q^{\text{Cr}(\pi^2)}\bigg)\sum_{j_1,\ldots,j_{2r}=1}^o\bigg(\sum_{\pi^3\in \mathcal{P}_2(\{1,\ldots,N'_r\})}q^{\text{Cr}(\pi^3)}\prod_{\{e,f\}\in \pi^3} \vp\big( Z^{\mathbf{j}}_e Z^{\mathbf{j}}_f \big)\bigg) \ .\label{bou-1}
\end{eqnarray}
Now observe that the last sum in (\ref{bou-1}) actually corresponds to
\begin{equation}
\sum_{j_1,\ldots,j_{2r}=1}^o\bigg(\sum_{\pi^3\in \mathcal{P}_2(\{1,\ldots,N'_r\})}q^{\text{Cr}(\pi^3)}\prod_{\{e,f\}\in \pi^3} \vp\big( Z^{\mathbf{j}}_e Z^{\mathbf{j}}_f \big)\bigg)=\vp\Big( \Big| \sum_{j=1}^o U_jV_j \Big|^{2r}\Big) \ ,\label{bou-2}
\end{equation}
and for every fixed $\pi^1\in \mathcal{P}_2(\{1,\ldots,2r\})$,
\begin{eqnarray}
\lefteqn{\sum_{i_1,\ldots,i_{2r}=k+1}^{\ell-1}\prod_{\{a,b\}\in \pi^1} \1_{\{i_a=i_b\}}}\nonumber\\
&=&\bigg(\sum_{i_1,\ldots,i_{2r}=k+1}^{\ell-1}\prod_{\{a,b\}\in \pi^1} \1_{\{i_a=i_b\}}\bigg)^{2(1-2\ga)} \bigg(\sum_{i_1,\ldots,i_{2r}=k+1}^{\ell-1}\prod_{\{a,b\}\in \pi^1} \1_{\{i_a=i_b\}}\bigg)^{4\ga-1}\nonumber \\
&\leq&  (\ell-(k+1))^{2(1-2\ga)r}(\ell-(k+1))^{2r(4\ga-1)}\nonumber\\
& \leq &|t_\ell^n-t_{k+1}^n|^{4r\ga} 2^{4r\ga n} \, \leq \, |t-s|^{4r\ga} 2^{4r\ga n} \, . \label{bou-3}
\end{eqnarray}
By injecting (\ref{bou-2}) and (\ref{bou-3}) into (\ref{bou-1}), we end up with the estimate
\begin{eqnarray*}
\lefteqn{\vp\big( |S^{1,n}_{st}[\bu]|^{2r} \big)}\\
 &\leq& |t-s|^{4r\ga}2^{-2r(1-2\ga)n} \vp\big( |X_1|^2)^{2r}\bigg( \sum_{\pi\in\mathcal{P}_2(\{1,\ldots,2r\})}q^{\text{Cr}(\pi)}\bigg)^2\vp\Big( \Big| \sum_{j=1}^o U_jV_j \Big|^{2r}\Big)\\
&\leq& |t-s|^{4r\ga}2^{-2r(1-2\ga)n} \vp\big( |X_1|^2)^{2r}\vp\big( |X_1|^{2r}\big)^{2}\vp\Big( \Big| \sum_{j=1}^o U_jV_j \Big|^{2r}\Big) \ ,
\end{eqnarray*}
and so
\begin{eqnarray}
\lefteqn{\vp\big( |S^{1,n}_{st}[\bu]|^{2r} \big)^{1/2r}}\nonumber\\
 &\leq&  |t-s|^{2\ga}2^{-(1-2\ga)n} \vp\big( |X_1|^2)\vp\big( |X_1|^{2r}\big)^{1/r}\vp\Big( \Big| \sum_{j=1}^o U_jV_j \Big|^{2r}\Big)^{1/2r} \nonumber\\
&\leq&  |t-s|^{2\ga}2^{-(1-2\ga)n} \|X_1\|^4 \Big\| \sum_{j=1}^o U_jV_j \Big\| \nonumber \\
& \leq& |t-s|^{2\ga}2^{-(1-2\ga)n} \|X_1\|^4 \Big(\sum_{j=1}^o \|U_j\| \|V_j\| \Big) \ .\label{bound-norm-l-2r}
\end{eqnarray}
It is easy to see that the above arguments could also be applied to the more general situation where $\bu:=\sum_{j=1}^o U_j \otimes V_j$ with
$$U_j:=\sum_{k=0}^{K_j} \alpha_{j,k} X_{s^{j,k}_1} \cdots X_{s^{j,k}_{m_{j,k}}}$$
and
$$V_j:=\sum_{\ell=0}^{L_j}\beta_{j,\ell}X_{u^{j,\ell}_{1}} \cdots X_{u^{j,\ell}_{p_{j,\ell}}} \ , \quad \alpha_{j,k},\beta_{j,\ell}\in \C \ , \ s_a^{j,k},u_b^{j,\ell}\in [0,s] \ ,$$
leading in the end to the same bound (\ref{bound-norm-l-2r}). Therefore, this bound (\ref{bound-norm-l-2r}) can actually be extended to any $U_j,V_j \in \ca_s$, which then entails that for every $\bu\in \ca_s \hat{\otimes}\ca_s$,
$$\vp\big( |S^{1,n}_{st}[\bu]|^{2r} \big)^{1/2r}  \leq |t-s|^{2\ga}2^{-(1-2\ga)n} \|X_1\|^4 \|\bu\| \ ,$$
and by letting $r$ tend to infinity, we get by (\ref{prop:norm-op}) that
$$\|S^{1,n}_{st}[\bu]\|\leq |t-s|^{2\ga}2^{-(1-2\ga)n} \|X_1\|^4 \|\bu\| \ .$$
The very same reasoning can of course be used in order to estimate $\|S^{2,n}_{st}[\bu]\|$, with the same resulting bound. Going back to (\ref{starting-decompo-order-two}) and (\ref{main-term-area}), we have thus proved that $\mathbb{X}^{2,n}$ is a Cauchy sequence in $\cac_2^\la(\cl_T(\ca_{\rightharpoonup}))$, and by Lemma \ref{lem:complete-space}, we can therefore assert that it converges in this space to some element $\mathbb{X}^{2,S}$. 

\smallskip

\noindent
The product Chen identity (\ref{lebesgue-levy-area}) for $\mathbb{X}^{2,S}$ is readily obtained by passing to the limit (in a pointwise way) in the product Chen identity that is trivially satisfied by $\mathbb{X}^{2,n}$. Finally, in order to show that $\mathbb{X}^{2,S}$ actually belongs to $\cac_2^\la(\cl_T(\ca_{\to}))$, fix $s<t$, $\bu\in \ca_s \hat{\otimes}\ca_s$, and set
$$W^n:=\mathbb{X}^{2,n}_{st}[\bu] \ , \ W:=\mathbb{X}^{2,S}_{st}[\bu] \ , \ \bar{W}^n:=\int_s^{t_\ell^n} (\bu \sharp \delta X^n_{su})\, \mathrm{d}X^n_u \ ,$$
where $t_\ell^n$ is such that $s<t_\ell^n\leq t <t_{\ell+1}^n$ (considering $n$ large enough). Using the first estimate in (\ref{fir-or-pr}), it is easy to check that $\|W^n-\bar{W}^n \| \to 0$, and so, since $\|W^n-W\|\to 0$, we get that $\|\bar{W}^n - W\|\to 0$. As $\bar{W}^n \in \ca_t$, we can conclude that $W\in \ca_t$, as expected.

\end{proof}

\begin{remark}
Observe that in a commutative setting, the sum (\ref{main-term-area}) would simply vanish, leading to an almost trivial proof, which clearly points out the specificity of our non-commutative framework (as evoked in Remark \ref{rk:prod-levy-area}). 
\end{remark}

\section{Comparison with $L^2(\vp)$-constructions}\label{sec:comparison}

Our objective in this section is to compare the previous $L^\infty(\vp)$-constructions (i.e., constructions based on the operator norm) with the $L^2(\vp)$-constructions exhibited by Donati-Martin in \cite{donati}. In brief, we shall see that, when studied in $L^2(\vp)$, the previous rough constructions correspond to Stratonovich-type integrals, while the constructions in \cite{donati} are more of a Itô-type. This comparison relies on an additional ingredient, the so-called second-quantization operator, whose central role in $q$-integration theory was already pointed out in Donati-Martin's work. 

\smallskip

Since we intend to make specific references to some of the results of \cite{donati}, we assume for simplicity that we are exactly in the same setting as in the latter study. Namely, for a fixed $q\in [0,1)$, we assume that the $q$-Bm $\{X_t\}_{t\geq 0}$ we will handle in this section is constructed as the \enquote{canonical process} on the $q$-Fock space $(\ca,\vp)$ (see \cite{donati} for details on these structures). 

\smallskip 

As in the previous sections, we denote by $\ca_t$ the closure, with respect to the operator norm, of the algebra generated by $\{X_s\}_{s\leq t}$.

\subsection{Second quantization}
Recall that the space $L^2(\vp)$ is defined as the completion of $\ca$ as a Hilbert space through the product
\begin{equation}\label{l-2-product}
\langle U,V\rangle:=\vp( UV^\ast) \ .
\end{equation}
We will denote by $\|.\|_{L^2(\vp)}$ the associated norm, to be distinguished from the operator norm $\|.\|$. For every $t\geq 0$, let $\cb_t$ be the von Neumann algebra generated by $\{X_s\}_{s\leq t}$ (observe in particular that $\ca_t\subset \cb_t \subset \ca$) and denote by $\vp( \cdot |\cb_t)$ the conditional expectation with respect to $\cb_t$. In other words, for every $U\in \ca$, $\vp(U|\cb_t)$ stands for the orthogonal projection of $U$ onto $\cb_t$, with respect to the product (\ref{l-2-product}): $Z=\vp(U|\cb_t)$ if and only if $Z\in \cb_t$ and $\vp(ZW^\ast)=\vp(UW^\ast)$ for every $W\in \cb_t$. 

\smallskip

A possible way to introduce the second-quantization operator goes through the following invariance result:

\begin{lemma}\label{lem:second-quanti}\cite[Theorem 3.1]{donati}.
For all $s_0<t_0$, $s_1<t_1$, with $s_0\leq s_1$, and $U\in \ca_{s_0}\subset \ca_{s_1}$, it holds that
$$\frac{\vp\big((\delta X)_{s_0t_0} U (\delta X)_{s_0t_0} \big| \cb_{s_0}\big)}{|t_0-s_0|}=\frac{\vp\big((\delta X)_{s_1t_1}U (\delta X)_{s_1t_1}\big| \cb_{s_1}\big)}{|t_1-s_1|} \ .$$
\end{lemma}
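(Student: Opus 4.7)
The plan is to work in the standard $q$-Fock representation of $(\ca,\vp)$ fixed at the start of this section and to reduce the identity to an explicit computation. There, $X_t=a(\mathbf{1}_{[0,t]})+a^{\ast}(\mathbf{1}_{[0,t]})$, so that every increment reads $(\delta X)_{s,t}=X(h):=a(h)+a^{\ast}(h)$ with $h=\mathbf{1}_{[s,t]}$ and $\|h\|_{L^2(\R_+)}^{2}=t-s$. Writing $h_i:=\mathbf{1}_{[s_i,t_i]}$, the statement rewrites as the independence (from $i\in\{0,1\}$) of the quantity $\vp\big(X(h_i)\,U\,X(h_i)\,\big|\,\cb_{s_i}\big)/\|h_i\|^2$. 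By $L^2(\vp)$-continuity of the conditional expectation and density of the Wick monomials $:\!X(g_1)\cdots X(g_n)\!:$ (with $g_j \in L^2([0,s_0])$) in the $L^2(\vp)$-closure of $\ca_{s_0}$, it will suffice to establish the lemma for such a fixed $U$. The two key observations will then be: (a) $h_i \perp g_j$ for every $i,j$, since $[s_i,t_i]\subset[s_0,\infty)$; (b) $P_{s_i}h_i=0$, where $P_s$ denotes the orthogonal projection of $L^2(\R_+)$ onto $L^2([0,s])$.

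Under (a), the $q$-Wick insertion rules force all contractions of an $X(h_i)$ against some $X(g_j)$ to vanish, leaving only the single nontrivial contraction between the two copies of $X(h_i)$ (separated by $n$ letters, hence of weight $q^n$), and yielding
\begin{equation*}
X(h_i)\,U\,X(h_i) \,=\, :\!X(h_i) X(g_1) \cdots X(g_n) X(h_i)\!: \,+\, q^n\,\|h_i\|^{2}\,U\ .
\end{equation*}
Applying the conditional expectation $\vp(\cdot|\cb_{s_i})$ and invoking the classical $q$-Fock formula $\vp\big( :\!X(f_1)\cdots X(f_m)\!: \big| \cb_s\big) = :\!X(P_sf_1)\cdots X(P_sf_m)\!:$, which comes from the identity $\vp(\cdot|\cb_s)\Omega = Q_s(\cdot\,\Omega)$ with $Q_s$ the orthogonal projection of $\mathcal{F}_q(L^2(\R_+))$ onto $\mathcal{F}_q(L^2([0,s]))$ (and $Q_s$ reduces to $P_s^{\otimes m}$ on each $m$-particle space, since $P_s^{\otimes m}$ is already orthogonal with respect to the $q$-inner product), property (b) annihilates the Wick-product summand, while the residual term $q^n\|h_i\|^2 U$ already lies in $\ca_{s_0}\subset\cb_{s_i}$ and is therefore left unchanged. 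Dividing by $\|h_i\|^2 = t_i - s_i$ gives $q^n U$ on both sides, independently of $i$, as announced.

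The main technical hurdle is the clean operator-algebraic justification of the $q$-Wick insertion rule and of the Wick-projection identity for conditional expectations. Both are classical in the $q$-Fock framework, and the standard route is to verify each of them on the vacuum vector first, then transfer the result back to the algebra level via the injectivity of $A\mapsto A\,\Omega$ on the dense $\ast$-subalgebra generated by $\{X_t\}_{t\geq 0}$.
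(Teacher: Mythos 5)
Your argument is correct, and it is worth noting that the paper itself gives no proof of this lemma: the statement is quoted directly from Theorem 3.1 of \cite{donati}, and your $q$-Fock-space computation essentially reconstructs the standard argument behind that reference — the trace-preserving conditional expectation $\vp(\cdot|\cb_s)$ is implemented by the second quantization of $P_s$, so on a Wick monomial of degree $n$ with arguments supported in $[0,s_0]$ both sides of the identity reduce to $q^n U$ (which at the same time identifies $\Gamma_q$ as the second quantization of $q\,\mathrm{Id}$, consistently with the paper's remark that $\Gamma_0(U)=\vp(U)$ and, morally, $\Gamma_1(U)=U$). The only point to make explicit in your closing density step is that $U\mapsto (\delta X)_{s_it_i}\,U\,(\delta X)_{s_it_i}$ is $L^2(\vp)$-continuous because $(\delta X)_{s_it_i}$ is bounded in operator norm; combined with the $L^2(\vp)$-contractivity of the conditional expectation and the faithfulness of $\vp$ (so that $L^2(\vp)$-equality of algebra elements gives operator equality), the passage from Wick polynomials to arbitrary $U\in\ca_{s_0}$ is immediate.
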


\begin{definition}\label{def:second-quanti}
We call \emph{second quantization} of $X$ the operator $\Gamma_q: \cup_{t\geq 0} \ca_t \to \ca$ defined for all $s\geq 0$ and $U\in \ca_s$ by the formula
$$\Gamma_q(U):=\vp\big((\delta X)_{s,s+1} U (\delta X)_{s,s+1} \big| \cb_{s}\big) \ .$$
In particular, for all $s\geq 0$ and $U\in \ca_s$, $\Gamma_q(U)\in \cb_s$, $\Gamma_q(U)^\ast=\Gamma_q(U^\ast)$ and 
\begin{equation}\label{contrac-property}
\|\Gamma_q(U)\|_{L^2(\vp)}\leq \|(\delta X)_{s,s+1} U (\delta X)_{s,s+1}\|_{L^2(\vp)}\leq \|X_1\|^2\|U\| \ .
\end{equation}
\end{definition}

\smallskip

\begin{remark}
For $q=0$, it is easy to check that, thanks to the freeness properties of $X$, the second quantization reduces to $\Gamma_0(U)=\vp(U)$, while in the commutative situation, that is when $q\to 1$, one has (at least morally) $\Gamma_1(U)=U$.
\end{remark}

\smallskip

In fact, we will essentially use the operator $\Gamma_q$ through the following result, which offers a quite general tool to study Itô/Stratonovich correction terms (for the sake of clarity, we have postponed the proof of this proposition to Section \ref{subsec:proof}):

\begin{proposition}\label{prop:second-quant-lim-sum}
For every adapted triprocess $\mathcal{U}\in \cac_1^\varepsilon([s,t];\ca^{\hat{\otimes}3})$ ($s<t$, $\varepsilon >0$) and every subdivision $\Delta$ of $[s,t]$ whose mesh $|\Delta|$ tends to $0$, it holds that 
\begin{equation}\label{converg-sec-quant}
\sum_{(t_i)\in \Delta} (\delta X_{t_i t_{i+1}} \sharp \mathcal{U}_{t_i})\sharp \delta X_{t_i t_{i+1}}  \longrightarrow \int_s^t \big[\id \times \Gamma_q\times \id\big](\mathcal{U}_{u})\, \mathrm{d}u \quad \text{in} \ L^2(\vp) \ ,
\end{equation}
where $\id \times \Gamma_q \times \id$ stands for the continuous extension, as an operator from $\cup_{u\geq 0}\ca_u^{\hat{\otimes}3}$ to $L^2(\vp)$, of the operator
$$(\id \times \Gamma_q \times \id)(U_1 \otimes U_2 \otimes U_3):=U_1 \Gamma_q(U_2) U_3 \quad , \ U_1,U_2,U_2\in \ca_u \ .$$
\end{proposition}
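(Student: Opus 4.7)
The plan is to perform an It\^o-type decomposition of each Riemann summand, isolating a ``martingale'' piece that vanishes in $L^2(\vp)$ from a bona fide Riemann sum for the target integral. For each $[t_i,t_{i+1}]$ in $\Delta$, I would write
\begin{equation*}
\bigl(\delta X_{t_i t_{i+1}}\sharp \mathcal{U}_{t_i}\bigr)\sharp \delta X_{t_i t_{i+1}} \;=\; M_i \,+\, (t_{i+1}-t_i)\,[\id\times \Gamma_q\times \id](\mathcal{U}_{t_i}),
\end{equation*}
where $M_i$ is defined by this identity. The key claim is that $\vp(M_i\,|\,\cb_{t_i})=0$: this follows from Lemma \ref{lem:second-quanti} combined with the $\cb_{t_i}$-bimodule property of the conditional expectation on elementary tensors $\mathcal{U}_{t_i}=U_1\otimes U_2\otimes U_3$, and then extends to all of $\ca_{t_i}^{\hat{\otimes}3}$ by linearity together with the continuity (from the projective tensor norm to $L^2(\vp)$) of both $\id\times \Gamma_q\times \id$ and of $\mathcal V \mapsto (\delta X\sharp \mathcal V)\sharp \delta X$; the first of these two continuities is itself a direct consequence of the contraction bound \eqref{contrac-property}.

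Next, I would control the martingale sum $\sum_i M_i$ by expanding its squared $L^2(\vp)$-norm. For $i<j$, the adaptedness of $\mathcal{U}$ yields $M_i\in \cb_{t_{i+1}}\subset \cb_{t_j}$, so by the tracial tower property and the bimodule identity,
\begin{equation*}
\vp(M_i M_j^*) \;=\; \vp\bigl(\vp(M_i M_j^*\,|\,\cb_{t_j})\bigr) \;=\; \vp\bigl(M_i\,\vp(M_j\,|\,\cb_{t_j})^*\bigr) \;=\; 0,
\end{equation*}
and the case $j<i$ follows from $\vp(A^*)=\overline{\vp(A)}$. Hence $\|\sum_i M_i\|_{L^2(\vp)}^2=\sum_i \vp(|M_i|^2)$. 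Using the elementary bounds $\|(\delta X\sharp \mathcal{V})\sharp \delta X\|\leq \|\delta X\|^2 \|\mathcal{V}\|$ and $\|[\id\times \Gamma_q\times \id](\mathcal V)\|_{L^2(\vp)}\leq \|X_1\|^2 \|\mathcal V\|$, combined with the $1/2$-H\"older regularity \eqref{q-bm-holder}, one gets $\vp(|M_i|^2)\lesssim (t_{i+1}-t_i)^2 \|\mathcal{U}\|_\infty^2$, so that $\|\sum_i M_i\|_{L^2(\vp)}^2=O(|\Delta|)\to 0$.

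Finally, the remaining piece $\sum_i (t_{i+1}-t_i)[\id\times \Gamma_q\times \id](\mathcal{U}_{t_i})$ is an ordinary Riemann sum: by adaptedness, $\mathcal{U}_u-\mathcal{U}_{t_i}\in \ca_u^{\hat{\otimes}3}$ for $u\in[t_i,t_{i+1}]$, and the $\varepsilon$-H\"older regularity of $\mathcal{U}$ together with the $L^2(\vp)$-continuity of $\id\times \Gamma_q\times \id$ yields a convergence rate of order $|\Delta|^\varepsilon$ towards $\int_s^t[\id\times \Gamma_q\times \id](\mathcal{U}_u)\,du$. The main technical obstacle I anticipate is precisely the justification that $\vp(M_i\,|\,\cb_{t_i})=0$ beyond elementary tensors: it requires intertwining projective-norm density, the continuity of $\id\times\Gamma_q\times\id$, and the continuity of the conditional expectation $\vp(\cdot\,|\,\cb_{t_i})$ as a map $\ca\to L^2(\vp)$ -- everything else in the argument is essentially routine once this point is secured.
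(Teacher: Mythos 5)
Your proof is correct and follows essentially the same route as the paper's: the same decomposition of each summand into a term $M_i$ with $\vp(M_i\,|\,\cb_{t_i})=0$ (via Lemma \ref{lem:second-quanti} and the bimodule property) plus $(t_{i+1}-t_i)[\id\times\Gamma_q\times\id](\mathcal{U}_{t_i})$, with cross-terms annihilated by conditioning, diagonal terms bounded by $(t_{i+1}-t_i)^2\|X_1\|^4\|\mathcal{U}\|_\infty^2$, and a routine Riemann-sum argument for the remaining integral. The only cosmetic difference is that the paper handles the passage beyond elementary tensors by approximating $\mathcal{U}$ with adapted triprocesses $\mathcal{U}^n$ in the algebraic tensor product and taking the limit in the final estimate, whereas you extend the conditional-expectation identity by continuity at the outset — the same density/continuity ingredients you correctly identify as the delicate point.
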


\subsection{Non-commutative Itô integral}\label{subsec:appli-free}

Let us here slightly rephrase the results of \cite{donati} regarding Itô's approach to stochastic integration with respect to $X$. 

\begin{definition}\label{defi:ito-integral}
Fix an interval $I\subset \R$. An adapted biprocess $\bu: I\to \ca \hat{\otimes} \ca$ is said to be \emph{Itô integrable} against $X$ if it is adapted and if for every partition $\Delta$ of $I$ whose mesh $|\Delta|$ tends to $0$, the sequence of Riemann sums
$$S^\Delta_X(\bu):=\sum_{t_i \in \Delta} \bu_{t_i}\sharp \delta X_{t_it_{i+1}}$$
converges in $L^2(\vp)$ (as $|\Delta |\to 0$). In this case, we call the limit of $S^\Delta_X(\bu)$ the product Itô integral of $\bu$ against $X$, and we denote it by
$$\int_I \bu_s \sharp \mathrm{d}X_s \in L^2(\vp) \ .$$
\end{definition}

Given a biprocess $\bu :I\to \ca\hat{\otimes}\ca$ and a partition $\Delta$ of $I$, we denote by $\bu^\Delta$ the step-approximation
$$\bu^\Delta :=\sum_{t_i\in \Delta} \bu_{t_i} \1_{[t_i,t_{i+1}[} \ .$$
The following isometry property, to be compared with the classical Brownian Itô isometry, is the key ingredient to identify Itô-integrable processes:

\begin{proposition}
\cite[Proposition 3.3]{donati}. For every interval $I\subset \R$, all adapted biprocesses 
$$\bu:I \to \ca \otimes \ca \quad , \quad \bv:I\to \ca \otimes \ca \ ,$$
 and all partitions $\Delta_1,\Delta_2$ of $I$, it holds that 
\begin{equation}\label{isom-ito-int}
\langle S^{\Delta_1}_X(\bu) , S^{\Delta_2}_X(\bv) \rangle_{L^2(\vp)} =\int_0^\infty \langle\langle \bu^{\Delta_1}_u,\bv^{\Delta_2}_u \rangle\rangle_q\, \mathrm{d}u \  ,
\end{equation} 
where $\langle\langle .,. \rangle\rangle_q$ is the bilinear extension of the application defined for all $U_1,U_2,V_1,V_2\in \cup_{t\geq 0}\ca_t$ as
$$\langle\langle U_1 \otimes U_2,V_1\otimes V_2 \rangle\rangle_q:=\vp\big(U_1 \Gamma_q(U_2 V_{2}^\ast) V_1^{\ast}  \big) \ .$$
\end{proposition}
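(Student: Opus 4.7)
The plan is to reduce the identity to a pointwise computation on the common refinement of the two partitions. By bilinearity of both sides in $\bu, \bv$, I can assume $\bu_{t_i}$ and $\bv_{s_j}$ are elementary tensors $U_{1,i}\otimes U_{2,i}$ and $V_{1,j}\otimes V_{2,j}$. Next, I let $\Delta = \Delta_1 \cup \Delta_2 = \{r_k\}$ and note that splitting the increments $\delta X_{t_it_{i+1}}$ along $\Delta$ rewrites the two Riemann sums as
\[
S_X^{\Delta_1}(\bu) = \sum_k \bu^{\Delta_1}_{r_k} \sharp \delta X_{r_kr_{k+1}}, \qquad S_X^{\Delta_2}(\bv) = \sum_k \bv^{\Delta_2}_{r_k} \sharp \delta X_{r_kr_{k+1}},
\]
so that the $L^2(\vp)$ inner product becomes a double sum $\sum_{k,\ell}\vp\big((\bu^{\Delta_1}_{r_k}\sharp \delta X_{r_kr_{k+1}})(\bv^{\Delta_2}_{r_\ell}\sharp \delta X_{r_\ell r_{\ell+1}})^\ast\big)$. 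The goal is then to show the cross terms $k\neq\ell$ vanish while the diagonal terms produce $(r_{k+1}-r_k)\,\langle\langle \bu^{\Delta_1}_{r_k},\bv^{\Delta_2}_{r_k}\rangle\rangle_q$.

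For the cross terms, say $k>\ell$, I would use the self-adjointness of $\delta X$ and write the term as $\vp\big(U_{1,k}\,\delta X_{r_kr_{k+1}}\,W\big)$ with $W:=U_{2,k}V_{2,\ell}^\ast\,\delta X_{r_\ell r_{\ell+1}}V_{1,\ell}^\ast\in \ca_{r_k}\subset \cb_{r_k}$. Cycling once via the trace and applying the module property of the conditional expectation onto $\cb_{r_k}$ gives
\[
\vp\big(\delta X_{r_kr_{k+1}}\,W\,U_{1,k}\big)=\vp\big(\vp(\delta X_{r_kr_{k+1}}\,|\,\cb_{r_k})\,W\,U_{1,k}\big),
\]
and the $q$-Wick formula (\ref{form-q-gaussian}) shows $\vp(Z\,\delta X_{r_kr_{k+1}})=0$ for every $Z\in\cb_{r_k}$, so $\vp(\delta X_{r_kr_{k+1}}|\cb_{r_k})=0$ and the cross term vanishes.

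For the diagonal term ($k=\ell$), after one cyclic rearrangement I am reduced to
\[
\vp\big(V_{1,k}^\ast U_{1,k}\,\delta X_{r_kr_{k+1}}\,U_{2,k}V_{2,k}^\ast\,\delta X_{r_kr_{k+1}}\big),
\]
and since $V_{1,k}^\ast U_{1,k}\in \cb_{r_k}$, the module property pulls it outside the conditional expectation. At this point I invoke Lemma \ref{lem:second-quanti} together with Definition \ref{def:second-quanti}, which give exactly
\[
\vp\big(\delta X_{r_kr_{k+1}}\,U_{2,k}V_{2,k}^\ast\,\delta X_{r_kr_{k+1}}\,\big|\,\cb_{r_k}\big)=(r_{k+1}-r_k)\,\Gamma_q(U_{2,k}V_{2,k}^\ast).
\]
Applying the trace and rearranging yields $(r_{k+1}-r_k)\,\vp\big(U_{1,k}\,\Gamma_q(U_{2,k}V_{2,k}^\ast)\,V_{1,k}^\ast\big) = (r_{k+1}-r_k)\langle\langle \bu^{\Delta_1}_{r_k},\bv^{\Delta_2}_{r_k}\rangle\rangle_q$. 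Summing over $k$ and noting that $\bu^{\Delta_1},\bv^{\Delta_2}$ are constant on each $[r_k,r_{k+1})$ of the common refinement rewrites the sum as the Lebesgue integral on the right-hand side, concluding the proof.

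The only subtle point is the vanishing of the cross terms in the non-commutative setting: the argument crucially requires cycling the integrand under the trace so that the \enquote{future} increment $\delta X_{r_kr_{k+1}}$ lands adjacent to an element of $\cb_{r_k}$, after which the $q$-Wick formula supplies the needed orthogonality. Everything else is a direct translation of the definitions of $\Gamma_q$ and $\langle\langle\cdot,\cdot\rangle\rangle_q$.
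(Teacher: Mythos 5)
Your argument is correct; note, however, that the paper itself gives no proof of this statement, which is quoted from Donati-Martin \cite[Proposition 3.3]{donati}, so there is no internal proof to compare against. What can be said is that your route — pass to the common refinement, kill the off-diagonal terms by cycling under the trace and conditioning on $\cb_{r_k}$ (using that $\vp(\delta X_{r_k r_{k+1}}\, Z)=0$ for every $Z\in\cb_{r_k}$, which follows from the $q$-Wick formula (\ref{form-q-gaussian}) together with the vanishing covariances, extended from polynomials in $\{X_u\}_{u\leq r_k}$ to $\cb_{r_k}$ by weak continuity of $\vp$), then identify the diagonal terms through Lemma \ref{lem:second-quanti} and Definition \ref{def:second-quanti} — is precisely the mechanism the authors deploy themselves in the proof of Proposition \ref{prop:second-quant-lim-sum} (Step 1: non-diagonal terms vanish by the conditioning argument; Step 2: diagonal terms produce $\Gamma_q$ with the factor $t_{i+1}-t_i$), so your proof is fully in the spirit of the paper. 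Two points worth making explicit in a final write-up: (i) you only treat the cross terms with $k>\ell$; the case $k<\ell$ follows by the same cycling argument applied to the other increment (or by swapping the roles of $\bu$ and $\bv$ and taking adjoints), and should be recorded; (ii) in the reduction to elementary tensors, adaptedness lets you choose the factors $U_{1,i},U_{2,i}$ (resp. $V_{1,j},V_{2,j}$) inside $\ca_{t_i}$ (resp. $\ca_{s_j}$), since the values lie in the algebraic tensor product $\ca_{t_i}\otimes\ca_{t_i}$; this is what legitimates both the expression $\Gamma_q(U_2V_2^\ast)$ and the conditioning steps, and also why the (conjugate-)bilinearity of the two sides suffices for the reduction. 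With these remarks added, the computation of the diagonal terms gives exactly $(r_{k+1}-r_k)\,\langle\langle \bu^{\Delta_1}_{r_k},\bv^{\Delta_2}_{r_k}\rangle\rangle_q$, and summing over the refinement yields (\ref{isom-ito-int}).
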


\begin{corollary}\label{crit-ito-int}
Let $\bu:I\to \ca \hat{\otimes}\ca$ be an adapted biprocess such that
$$\int_I \| \bu_u\|_{\ca \hat{\otimes} \ca}^2 \, \mathrm{d}u <\infty \quad \text{and} \quad \int_I \|\bu^\Delta_u-\bu_u\|_{\ca \hat{\otimes}\ca}^2 \, \mathrm{d}u \to 0 \ \ \text{as}\  |\Delta|\to 0 \ ,$$
for every partition $\Delta$ of $I$.
Then $\bu$ is Itô integrable against $X$ and
\begin{equation}\label{ito-iso}
\Big\| \int_I \bu_u \sharp \mathrm{d}X_u \Big\|_{L^2(\vp)}^2=\int_I \langle \langle \bu_u, \bu_u \rangle\rangle_q \, \mathrm{d}u \ .
\end{equation}
\end{corollary}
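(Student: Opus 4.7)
The plan is to use the isometry (\ref{isom-ito-int}) as a Cauchy-type criterion, after first establishing continuity of the bilinear form $\langle\langle \cdot,\cdot\rangle\rangle_q$ with respect to the projective tensor norm. Concretely, I would first prove the key estimate
\begin{equation*}
|\langle\langle \bv, \bw \rangle\rangle_q| \leq \|X_1\|^2 \, \|\bv\|_{\ca\hat\otimes\ca}\,\|\bw\|_{\ca\hat\otimes\ca}
\end{equation*}
for any adapted step biprocesses $\bv,\bw$. The proof on elementary tensors $\bv=\sum_i U_i \otimes V_i$, $\bw=\sum_j U'_j \otimes V'_j$ reduces to bounding terms of the form $\vp(U_i \Gamma_q(V_i (V'_j)^\ast) (U'_j)^\ast)$; here one uses that the conditional expectation $\vp(\cdot|\cb_s)$ is a contraction in operator norm (hence $\|\Gamma_q(U)\| \leq \|X_1\|^2 \|U\|$ upgrading (\ref{contrac-property})), together with Cauchy--Schwarz in $L^2(\vp)$ and the bound $\|U_i\|_{L^2(\vp)} \leq \|U_i\|$. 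Taking the infimum over representations of $\bv$ and $\bw$ gives the claim.

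Next, I would combine this estimate with the isometry (\ref{isom-ito-int}) to show that $\{S^\Delta_X(\bu)\}_{\Delta}$ is Cauchy in $L^2(\vp)$ as $|\Delta|\to 0$. Indeed, for any two partitions $\Delta_1,\Delta_2$ of $I$, applying (\ref{isom-ito-int}) three times yields
\begin{equation*}
\| S^{\Delta_1}_X(\bu) - S^{\Delta_2}_X(\bu)\|_{L^2(\vp)}^2 = \int_I \langle\langle \bu^{\Delta_1}_u - \bu^{\Delta_2}_u,\,\bu^{\Delta_1}_u - \bu^{\Delta_2}_u\rangle\rangle_q\,\mathrm{d}u,
\end{equation*}
which by the key estimate is bounded by
\begin{equation*}
\|X_1\|^2 \int_I \|\bu^{\Delta_1}_u - \bu^{\Delta_2}_u\|_{\ca\hat\otimes\ca}^2 \,\mathrm{d}u \ \leq \ 2\|X_1\|^2 \int_I \big(\|\bu^{\Delta_1}_u - \bu_u\|^2 + \|\bu_u - \bu^{\Delta_2}_u\|^2\big)\,\mathrm{d}u.
\end{equation*}
By the second hypothesis, both summands tend to $0$ as $|\Delta_1|,|\Delta_2|\to 0$, so the sequence is Cauchy and converges to some $I(\bu) \in L^2(\vp)$, which we define as $\int_I \bu_u \sharp \mathrm{d}X_u$. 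This shows It\^o-integrability in the sense of Definition \ref{defi:ito-integral}.

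For the isometry identity (\ref{ito-iso}), I would pass to the limit in the diagonal version of (\ref{isom-ito-int}),
\begin{equation*}
\|S^\Delta_X(\bu)\|_{L^2(\vp)}^2 = \int_I \langle\langle \bu^\Delta_u, \bu^\Delta_u\rangle\rangle_q\,\mathrm{d}u.
\end{equation*}
The left-hand side tends to $\|I(\bu)\|_{L^2(\vp)}^2$ by the previous step. For the right-hand side, the continuity estimate gives
\begin{equation*}
\big|\langle\langle \bu^\Delta_u, \bu^\Delta_u\rangle\rangle_q - \langle\langle \bu_u, \bu_u\rangle\rangle_q\big| \leq \|X_1\|^2 \,\|\bu^\Delta_u - \bu_u\|\big(\|\bu^\Delta_u\| + \|\bu_u\|\big),
\end{equation*}
and Cauchy--Schwarz in $L^2(I;\mathrm{d}u)$ together with the two assumptions on $\bu$ (which also ensure the uniform-in-$\Delta$ bound $\int_I \|\bu^\Delta_u\|^2\,\mathrm{d}u \leq C$) yield convergence of the integrals.

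The main obstacle is the key estimate in the first paragraph: all later steps are then routine density/approximation manipulations, but that estimate really uses the operator-norm contractivity of the conditional expectation $\vp(\cdot|\cb_s)$ in an essential way (to upgrade the $L^2(\vp)$-bound (\ref{contrac-property}) to an operator-norm bound on $\Gamma_q$), and without it the control by the projective tensor norm of $\bu$ would not close.
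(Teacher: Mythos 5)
Your proposal is correct and follows essentially the same route as the paper: the isometry (\ref{isom-ito-int}) together with the continuity bound $\langle\langle \bv,\bv\rangle\rangle_q\leq \|X_1\|^2\|\bv\|_{\ca\hat{\otimes}\ca}^2$ gives a Cauchy criterion for the Riemann sums, and (\ref{ito-iso}) follows by passing to the limit in the diagonal isometry (the paper carries out the passage from algebraic to projective-tensor-valued biprocesses via an explicit approximating sequence $\bu^n$, which you subsume under routine density manipulations). One quibble: your closing claim that the operator-norm contractivity of $\vp(\cdot|\cb_s)$ is \emph{essential} is an overstatement --- the paper gets the key estimate from the $L^2(\vp)$-bound (\ref{contrac-property}) alone, using traciality, Cauchy--Schwarz in $L^2(\vp)$ and $\|\cdot\|_{L^2(\vp)}\leq\|\cdot\|$; your upgraded bound $\|\Gamma_q(U)\|\leq\|X_1\|^2\|U\|$ is nevertheless valid in this tracial von Neumann setting, so your argument stands.
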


\begin{proof}
Let us just provide a few details, the procedure being essential standard. Consider a sequence $\bu^n:I\to \ca \otimes \ca$ of adapted biprocesses such that for every $t\in I$, $\|\bu^n_t-\bu_t\| \to 0$. Then, given two partitions $\Delta_1,\Delta_2$ of $I$, one has by (\ref{isom-ito-int})
$$\big\| S^{\Delta_1}_X(\bu^n)-S^{\Delta_2}_X(\bu^n) \big\|_{L^2(\vp)}^2 =\int_I \langle \langle \bu^{n,\Delta_1}_u-\bu^{n,\Delta_2}_u, \bu^{n,\Delta_1}_u-\bu^{n,\Delta_2}_u \rangle\rangle_q \, \mathrm{d}u \ .$$
By applying Cauchy-Schwarz inequality and then (\ref{contrac-property}), it is readily checked that for all $\bv\in \ca_s\otimes \ca_s$, 
$$\langle\langle \bv,\bv\rangle\rangle_q \leq \|X_1\|^2 \|\bv \|^2 \ ,$$
 and so
$$\big\| S^{\Delta_1}_X(\bu^n)-S^{\Delta_2}_X(\bu^n) \big\|_{L^2(\vp)}^2 \leq \|X_1\|^2 \int_I \big\|\bu^{n,\Delta_1}_u-\bu^{n,\Delta_2}_u\big\|^2 \, \mathrm{d}u \ ,$$
which, by letting $n$ tend to infinity, leads us to
$$\big\| S^{\Delta_1}_X(\bu)-S^{\Delta_2}_X(\bu) \big\|_{L^2(\vp)}^2 \leq \|X_1\|^2 \int_I \big\|\bu^{\Delta_1}_u-\bu^{\Delta_2}_u\big\|^2 \, \mathrm{d}u \ .$$
The conclusion easily follows.
\end{proof}

\subsection{Comparison with the rough integral}
We now have all the tools to identify, as elements in $L^2(\vp)$, the rough constructions arising from Sections \ref{sec:general-rp} and \ref{sec:levy-area}. 

Let us first consider the situation at the level of the product Lévy area provided by Theorem \ref{theo:exi-levy-area-qbm}. To this end, given $0\leq s<t$ and $\bu\in \ca_s \hat{\otimes} \ca_s$, observe that, by Corollary \ref{crit-ito-int}, the biprocess $\bv_u:=(\bu \sharp \der X_{su}) \otimes 1$ is known to be Itô-integrable on $[s,t]$, which allows us to consider the integral
$$\int_s^t (\mathbf{U}\sharp \der X_{su}) \, \mathrm{d}X_u \ \in L^2(\vp)\  .$$

\begin{proposition}\label{prop:correction-ito-strato-areas}
For all $0\leq s<t$ and every $\mathbf{U}\in  \ca_s \hat{\otimes} \ca_s$, it holds that
\begin{equation}\label{correction-ito-strato-areas}
\mathbb{X}^{2,S}_{st}[\mathbf{U}]=\int_s^t (\mathbf{U}\sharp \der X_{su}) \, \mathrm{d}X_u+\frac12 (t-s)\big( \id \times \Gamma_q\big) [\bu] \quad  \text{in}\ L^2(\vp) \ ,
\end{equation}
where $\text{Id} \times \Gamma_q$ stands for the continuous extension, as an operator from $\ca_s \hat{\otimes} \ca_s$ to $L^2(\vp)$, of the operator 
$$\big(\text{Id} \times \Gamma_q\big)[U\otimes V]:=U \Gamma_q(V) \ .$$
\end{proposition}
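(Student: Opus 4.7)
The plan is to compare the two sides of (\ref{correction-ito-strato-areas}) by evaluating the classical It\^o--Stratonovich decomposition on the dyadic approximation $\mathbb{X}^{2,n}_{st}[\bu]$ and then passing to the limit in $L^2(\vp)$. Since by Theorem \ref{theo:exi-levy-area-qbm} one has $\mathbb{X}^{2,n}_{st}[\bu] \to \mathbb{X}^{2,S}_{st}[\bu]$ in operator norm, hence in $L^2(\vp)$ thanks to (\ref{prop:norm-op}), it suffices to identify the $L^2$-limit of each piece of this decomposition with one of the two terms on the right-hand side of (\ref{correction-ito-strato-areas}).

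First, I would assume for simplicity that $s$ and $t$ are dyadic endpoints of $D^n$; the boundary correction when this fails is $O(2^{-n(1/2-\gamma)})$ in operator norm by the very same estimate (\ref{fir-or-pr}) used in the proof of Theorem \ref{theo:exi-levy-area-qbm}, and therefore negligible in the limit. An elementary computation inside each dyadic interval yields
\begin{equation*}
\int_{t_i^n}^{t_{i+1}^n} (\bu \sharp \delta X^n_{su}) \, \mathrm{d}X^n_u = (\bu \sharp \delta X_{s t_i^n}) \, \delta X_{t_i^n t_{i+1}^n} + \tfrac{1}{2} (\bu \sharp \delta X_{t_i^n t_{i+1}^n}) \, \delta X_{t_i^n t_{i+1}^n},
\end{equation*}
so that, writing $I_n$ for the range of $i$ such that $s\leq t_i^n<t_{i+1}^n\leq t$, one obtains the split $\mathbb{X}^{2,n}_{st}[\bu] = S_1^n + \tfrac{1}{2} S_2^n$ with
\begin{equation*}
S_1^n := \sum_{i\in I_n} (\bu \sharp \delta X_{s t_i^n}) \, \delta X_{t_i^n t_{i+1}^n} \quad \text{and} \quad S_2^n := \sum_{i\in I_n} (\bu \sharp \delta X_{t_i^n t_{i+1}^n}) \, \delta X_{t_i^n t_{i+1}^n}.
\end{equation*}

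Next I would recognize $S_1^n$ as a Riemann sum (in the sense of Definition \ref{defi:ito-integral}) for the biprocess $\bv_u := (\bu \sharp \delta X_{su}) \otimes 1$ on $[s,t]$. This biprocess is adapted (since $\bu\in \ca_s \hat{\otimes}\ca_s\subset \ca_u \hat{\otimes}\ca_u$ and $\delta X_{su}\in \ca_u$), bounded in $\ca\hat{\otimes}\ca$ by $\|\bu\|\,\|X_1\|\,\sqrt{t-s}$, and continuous with modulus $\|\bv_u-\bv_v\|\lesssim \|\bu\|\,\|X_1\|\,|u-v|^{1/2}$, so that Corollary \ref{crit-ito-int} applies and gives $S_1^n \to \int_s^t (\bu \sharp \delta X_{su}) \, \mathrm{d}X_u$ in $L^2(\vp)$. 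For $S_2^n$, when $\bu=U_1\otimes U_2$ with $U_1,U_2\in\ca_s$, I would rewrite it as
\begin{equation*}
S_2^n = \sum_{i\in I_n} \big(\delta X_{t_i^n t_{i+1}^n} \sharp \mathcal{U}_{t_i^n}\big) \sharp \delta X_{t_i^n t_{i+1}^n}, \qquad \mathcal{U}_u \equiv U_1 \otimes U_2 \otimes 1,
\end{equation*}
so that Proposition \ref{prop:second-quant-lim-sum} (applied to the constant, and a fortiori $\varepsilon$-H\"older, triprocess $\mathcal{U}$) gives $S_2^n \to \int_s^t U_1 \Gamma_q(U_2) \, \mathrm{d}u = (t-s)\,(\mathrm{Id} \times \Gamma_q)[\bu]$ in $L^2(\vp)$.

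Finally, combining the two limits yields identity (\ref{correction-ito-strato-areas}) for elementary tensors $\bu$, and I would extend it to arbitrary $\bu\in \ca_s\hat{\otimes}\ca_s$ by linearity and density: both sides are continuous in $\bu$ with respect to the projective tensor norm (the right-hand side via the It\^o isometry (\ref{ito-iso}) together with (\ref{contrac-property}), the left-hand side via the estimate $\|\mathbb{X}^{2,S}_{st}[\bu]\|\lesssim \|\bu\|\,|t-s|^{2\gamma}$ from $\mathbb{X}^{2,S}\in \cac_2^{2\gamma}(\cl_T(\ca_\to))$). The main obstacle I expect is a bookkeeping one rather than a conceptual one: one must check that the non-dyadic boundary contributions vanish in $L^2$ (or in operator norm), and that the density extension is compatible with the somewhat delicate definition of $\mathrm{Id}\times \Gamma_q$ as a map from $\ca_s\hat{\otimes}\ca_s$ into $L^2(\vp)$ — both points being handled by the contraction estimate (\ref{contrac-property}) for $\Gamma_q$.
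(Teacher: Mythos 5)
Your proposal is correct and follows essentially the same route as the paper: the same splitting of the approximated L\'evy area into a left-point It\^o Riemann sum plus one half of the sum of \enquote{squared} increments, with the first limit identified through the It\^o isometry/integrability criterion of Corollary \ref{crit-ito-int} and the second through Proposition \ref{prop:second-quant-lim-sum}, the non-dyadic boundary terms being absorbed by the $1/2$-H\"older estimate exactly as in the paper (which handles this by adding $s,t$ to the partition rather than assuming dyadicity). The only cosmetic difference is that you argue the second term on elementary tensors and then conclude by linearity, density and the continuity of both sides, whereas the paper applies the same ingredients directly to a general $\mathbf{U}\in\ca_s\hat{\otimes}\ca_s$.
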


\begin{proof}
Fix $s<t$, $\mathbf{U}\in \ca_s \hat{\otimes} \ca_s$, and let $\Dti^n$ be the subdivision obtained by adding the two times $s,t$ to the dyadic partition $D^n:=\{i /2^{n},\, i\geq 0\}$. Denote by $\Xti^n$ the linear interpolation of $X$ along $\Dti^n$ and set $\Xha^n:=\sum_{\tha_i} X_{\tha_i} 1_{[\tha_i,\tha_{i+1})}$ where $\{s=\tha_1 < \ldots < \tha_n=t\}:=\Dti^n \cap [s,t]$. Besides, we recall that the notation $\mathbb{X}^{2,D^n}$ (or $\mathbb{X}^{2,\Dti^n}$) has been introduced in (\ref{levy-area-appr}).

\smallskip

\noindent
Using only the $1/2$-Hölder regularity of $X$ (see (\ref{q-bm-holder})), it is easy to check that for every $\mathbf{U}\in \ca_s \hat{\otimes} \ca_s$,
\begin{equation}\label{comp-d-dti}
\| \mathbb{X}^{2,D^n}_{st}[\mathbf{U}]-\mathbb{X}^{2,\Dti^n}_{st}[\mathbf{U}] \| \leq c \|X_1\|^2 \|\mathbf{U} \| \lln t-s \rrn^{2\ga} 2^{-n(1/2-\ga)} \ ,
\end{equation}
for some universal constant $c$ and for every $\ga\in (0,1/2)$. Thus, by Theorem \ref{theo:exi-levy-area-qbm}, we can assert that $\mathbb{X}^{2,\Dti^n}_{st}[\mathbf{U}]$ converges to $\mathbb{X}^{2,S}_{st}[\mathbf{U}]$ for the operator norm (and accordingly in $L^2(\vp)$).
Now write
\begin{eqnarray}
\mathbb{X}^{2,\Dti^n}_{st}[\mathbf{U}]&=&\int_s^t (\bu \sharp \delta \Xti^n_{su} )\, \mathrm{d}\Xti^n_u\\
&=& \sum_{k=1}^{n-1} \frac{1}{\tha_{k+1}-\tha_k} \int_{\tha_k}^{\tha_{k+1}}\bu \sharp\big( \delta X_{s\tha_k}+\frac{u-\tha_k}{\tha_{k+1}-\tha_k}(\der X)_{\tha_{k}\tha_{k+1}} \big) \, \mathrm{d}u \, (\der X)_{\tha_{k}\tha_{k+1}}\nonumber\\
&=& \sum_{k=1}^{n-1} (\bu \sharp \delta X_{s \tha_k} )\, \der X_{\tha_{k}\tha_{k+1}}+\frac{1}{2}\sum_{k=1}^{n-1} \bu \sharp (\der X)_{\tha_{k}\tha_{k+1}} \, (\der X)_{\tha_{k}\tha_{k+1}}\nonumber\\
&=& \int_s^t (\bu \sharp \delta \Xha^n_{su})  \, \mathrm{d}X_u+\frac{1}{2}\sum_{k=1}^{n-1} \bu \sharp (\der X)_{\tha_{k}\tha_{k+1}} \, (\der X)_{\tha_{k}\tha_{k+1}}.\label{two-terms}
\end{eqnarray}
Thanks to (\ref{ito-iso}), it holds that
\begin{eqnarray*}
\lefteqn{\big\| \int_s^t (\bu \sharp \delta \Xha^n_{su})  \, \mathrm{d}X_u-\int_s^t (\bu \sharp \delta X_{su})  \, \mathrm{d}X_u \big\|_{L^2(\vp)}^2}\\
&=&\int_s^t \langle\langle\, (\bu \sharp[ \delta \Xha^n_{su}-\delta X_{su}])\otimes 1,(\bu \sharp [ \delta \Xha^n_{su}- \delta X_{su}])\otimes 1\, \rangle\rangle_q \, \mathrm{d}u  \\
 &=& \int_s^t \big\| \bu \sharp[ \delta \Xha^n_{su}-\delta X_{su}] \big\|_{L^2(\vp)}^2 \, \mathrm{d}u \\
&\leq &  \|\bu\|^2  \sum_{k=1}^{n-1} \int_{\tha_k}^{\tha_{k+1}} \|X_{\tha_k}-X_u\|^2 \, \mathrm{d}u \\
& \leq & \|\bu\|^2 \sum_{k=1}^{n-1} \int_{\tha_k}^{\tha_{k+1}} (u-\tha_k) \, \mathrm{d}u \ \leq \ \frac12 \|\bu\|^2 2^{-n} \lln t-s \rrn \ \to \ 0 \ .
\end{eqnarray*}
Observe finally that the limit of the second term in (\ref{two-terms}) is immediately provided by Proposition \ref{prop:second-quant-lim-sum}, which achieves the proof of (\ref{correction-ito-strato-areas}).
\end{proof}

\smallskip

Let us now extend the correction formula (\ref{correction-ito-strato-areas}) to any adapted controlled biprocess, that is to the class of biprocesses introduced in Definition \ref{def:control-proc}. Using again Corollary \ref{crit-ito-int}, it is easy to check that, as an adapted Hölder path in $\ca \hat{\otimes} \ca$, any such controlled biprocess is Itô-integrable when considered on an interval $I$ of finite Lebesgue measure. This puts us in a position to state the formula:

\begin{corollary}\label{prop:transition} 
For all $0\leq s<t$ and every adapted controlled biprocess $\bu\in \mathbf{Q}_X([s,t])$ with decomposition (\ref{decompo-bipro}), it holds that
\begin{equation}\label{transi-ito-strato}
 \int_s^t \bu_u\sharp \mathrm{d}\mathbb{X}^{S}_u=\int_s^t \bu_u \sharp \mathrm{d}X_u
+\frac{1}{2} \int_s^t (\id \times \Gamma_q \times \id)[\mathcal{U}^{X,1}_u+\mathcal{U}^{X,2}_u] \, \mathrm{d}u \quad \text{in} \ L^2(\vp) \ .
\end{equation}
\end{corollary}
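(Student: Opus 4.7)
The plan is to connect the rough and It\^o Riemann sums term-by-term, using the local version of Proposition \ref{prop:correction-ito-strato-areas} on each slice of a partition. Fix a partition $D=\{s=t_0<\cdots<t_N=t\}$ of $[s,t]$ with mesh $|D|\to 0$. Writing $\bu_u=\bu_{t_i}+(\delta\bu)_{t_iu}$ on $[t_i,t_{i+1}]$ and inserting the controlled decomposition (\ref{decompo-bipro}), I would split
\begin{align*}
\int_s^t \bu_u \sharp \mathrm{d}X_u
&= \sum_i \bu_{t_i}\sharp (\delta X)_{t_it_{i+1}}
+\sum_i\int_{t_i}^{t_{i+1}}\!\! \bigl[(\delta X)_{t_iu}\sharp \mathcal{U}^{X,1}_{t_i}\bigr]\sharp \mathrm{d}X_u \\
&\quad +\sum_i \int_{t_i}^{t_{i+1}}\!\! \bigl[\mathcal{U}^{X,2}_{t_i}\sharp (\delta X)_{t_iu}\bigr]\sharp \mathrm{d}X_u
+R^D,
\end{align*}
where $R^D:=\sum_i \int_{t_i}^{t_{i+1}} \bu^\flat_{t_iu}\sharp \mathrm{d}X_u$. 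Note that each summand makes sense thanks to Corollary \ref{crit-ito-int} applied to the adapted H\"older biprocesses involved.

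The key step is to recognize each local It\^o contribution as a rough increment. For elementary tensors $\mathcal{U}^{X,1}_{t_i}=U_1\otimes U_2\otimes U_3\in \ca_{t_i}^{\hat{\otimes}3}$, the identity $(\delta X)_{t_iu}\sharp (U_1\otimes U_2\otimes U_3)=(U_1(\delta X)_{t_iu}U_2)\otimes U_3$ reduces the integral $\int_{t_i}^{t_{i+1}} U_1(\delta X)_{t_iu}U_2\,\mathrm{d}X_u\,\cdot U_3$ to a case covered by (\ref{correction-ito-strato-areas}) applied to the bitensor $U_1\otimes U_2$, giving
\[
\int_{t_i}^{t_{i+1}}\!\! \bigl[(\delta X)_{t_iu}\sharp \mathcal{U}^{X,1}_{t_i}\bigr]\sharp \mathrm{d}X_u
=[\mathbb{X}^{2,S}_{t_it_{i+1}}\times \id](\mathcal{U}^{X,1}_{t_i})-\tfrac12(t_{i+1}-t_i)(\id\times\Gamma_q\times \id)(\mathcal{U}^{X,1}_{t_i}).
\]
The identity extends to all of $\ca_{t_i}^{\hat{\otimes}3}$ by linearity and continuity in the projective norm (both sides being continuous in $\mathcal{U}^{X,1}$ by Corollary \ref{crit-ito-int} and Lemma \ref{lem:complete-space}). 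A symmetric argument, based on the definition of $\mathbb{X}^{2,S,*}$ as the formal adjoint of $\mathbb{X}^{2,S}$ and on the property $\Gamma_q(U^*)^*=\Gamma_q(U)$ from Definition \ref{def:second-quanti}, treats the $\mathcal{U}^{X,2}$-term with the same $\Gamma_q$-correction. Reinjecting these two identities exhibits the right-hand side of our splitting as the corrected rough Riemann sum of Proposition \ref{prop-int-gen} minus $\tfrac12\sum_i(t_{i+1}-t_i)(\id\times\Gamma_q\times \id)(\mathcal{U}^{X,1}_{t_i}+\mathcal{U}^{X,2}_{t_i})$ plus $R^D$. Letting $|D|\to 0$, the first piece converges in $\ca$ to $\int_s^t\bu_u\sharp \mathrm{d}\mathbb{X}^S_u$ by Proposition \ref{prop-int-gen}, while the second is a classical Riemann sum converging in $\ca$ (by the $\ga$-H\"older regularity of $\mathcal{U}^{X,1}$, $\mathcal{U}^{X,2}$ and the bound (\ref{contrac-property})) to $\tfrac12\int_s^t(\id\times\Gamma_q\times\id)(\mathcal{U}^{X,1}_u+\mathcal{U}^{X,2}_u)\,\mathrm{d}u$.

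The main obstacle is the vanishing of the remainder $R^D$ in $L^2(\vp)$; here the operator norm is too weak and one must use the It\^o isometry. Interpreting $R^D$ as a single It\^o integral over $[s,t]$ of the biprocess $\tilde{\bu}_u:=\bu^\flat_{t_iu}$ for $u\in[t_i,t_{i+1})$, the bound $\langle\langle\bv,\bv\rangle\rangle_q\leq \|X_1\|^2\|\bv\|^2$ (already used in the proof of Corollary \ref{crit-ito-int}) together with (\ref{ito-iso}) and $\bu^\flat\in \cac_2^{2\ga}$ yield
\[
\|R^D\|_{L^2(\vp)}^2 \leq \|X_1\|^2\sum_i\int_{t_i}^{t_{i+1}}\|\bu^\flat_{t_iu}\|^2\,\mathrm{d}u
\;\lesssim\; \cn[\bu^\flat;\cac_2^{2\ga}]^2\,(t-s)\,|D|^{4\ga}\;\xrightarrow[|D|\to 0]{}\;0.
\]
Rearranging the resulting limit identity gives exactly (\ref{transi-ito-strato}), completing the argument.
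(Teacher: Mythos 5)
Your argument is correct and is essentially the proof the paper omits: the paper simply invokes the Taylor-expansion argument of \cite[Proposition 5.6]{deya-schott}, which is exactly what you carry out — expand $\bu$ via the controlled decomposition on each slice, apply Proposition \ref{prop:correction-ito-strato-areas} (and its adjoint version) locally to recover the corrected Riemann sums of Proposition \ref{prop-int-gen} plus the $\Gamma_q$-correction, and kill the $\bu^\flat$-remainder through the It\^o isometry. Only cosmetic points: the Riemann sums of $(\id\times\Gamma_q\times\id)(\mathcal{U}^{X,1}+\mathcal{U}^{X,2})$ need only converge in $L^2(\vp)$ (the bound used is (\ref{contrac-property}), not an operator-norm one), and the continuity justifying the extension to general triprocesses comes from the $2\ga$-roughness bound on $\mathbb{X}^{2,S}$ rather than Lemma \ref{lem:complete-space}.
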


\begin{proof}
The transition from (\ref{correction-ito-strato-areas}) to (\ref{transi-ito-strato}) follows from the very same Taylor-expansion argument as in the proof of \cite[Proposition 5.6]{deya-schott} (related to the free case), and so, for the sake of conciseness, we do not repeat it here. 
\end{proof}

At this point, observe that the combination of Proposition \ref{pro-approx-1} and Corollary \ref{prop:transition} immediately yields the following $q$-extension of Itô/Stratonovich formula: for all $f\in\mathbf{F}_3$ and $s<t$, 
$$\der( f(X))_{st}=\int_s^t \partial f(X_u) \sharp \mathrm{d}\mathbb{X}^S_u=\int_s^t \partial f(X_u) \sharp \mathrm{d}X_u+\int_s^t [\id \times \Gamma_q \times \id](\partial^2 f(X_u)) \, \mathrm{d}u \ .$$

\smallskip

As another spin-off of Formula (\ref{transi-ito-strato}), we can finally derive an expression of the rough Stratonovich integral $\int_s^t \bu_u\sharp \mathrm{d}\mathbb{X}^S_u$ as the $L^2(\vp)$-limit of \enquote{mean-value} Riemann sums. The result, which emphasizes the analogy between the rough construction and the classical (commutative) Stratonovich integral, can be stated as follows:  
\begin{corollary}\label{coro:identif-strato}
For all $0\leq s<t$ and every adapted controlled biprocess $\bu\in \mathbf{Q}_X([s,t])$, it holds that
\begin{equation}\label{mean-val-sum}
\int_s^t \bu_u\sharp \mathrm{d}\mathbb{X}^S_u = \lim_{|\Delta|\to 0} \sum_{(t_i)\in \Delta} \frac{1}{2}\big(\bu_{t_i}+\bu_{t_{i+1}}\big) \sharp \delta X_{t_it_{i+1}} \quad \text{in} \ L^2(\vp) \ ,
\end{equation}
for any subdivision $\Delta$ of $[s,t]$ whose mesh $|\Delta|$ tends to $0$.
\end{corollary}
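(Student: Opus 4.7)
The plan is to combine Corollary \ref{prop:transition}, which already decomposes $\int_s^t \bu_u\sharp \mathrm{d}\mathbb{X}^S_u$ into an It\^o integral plus a second-quantization correction, with a direct analysis of the trapezoidal (midpoint) Riemann sum. The goal is to show that this sum converges in $L^2(\vp)$ to exactly the same quantity, term by term.

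First, I would rewrite
$$\sum_{(t_i)\in\Delta}\tfrac12(\bu_{t_i}+\bu_{t_{i+1}})\sharp\delta X_{t_it_{i+1}}=\sum_{(t_i)\in\Delta}\bu_{t_i}\sharp\delta X_{t_it_{i+1}}+\tfrac12\sum_{(t_i)\in\Delta}\delta\bu_{t_it_{i+1}}\sharp\delta X_{t_it_{i+1}}.$$
The first (left-endpoint) sum converges in $L^2(\vp)$ to $\int_s^t\bu_u\sharp\mathrm{d}X_u$ by the very definition of the It\^o integral (Definition \ref{defi:ito-integral}); the It\^o integrability of $\bu\in\mathbf{Q}_X([s,t])$ on the finite interval $[s,t]$ is the observation already recorded before Corollary \ref{prop:transition}, via Corollary \ref{crit-ito-int} applied to the continuous, $\ga$-H\"older biprocess $\bu$.

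To handle the second sum, I would use the controlled-biprocess decomposition (\ref{decompo-bipro}), which writes $\delta\bu_{t_it_{i+1}}$ as $\delta X_{t_it_{i+1}}\sharp\mathcal{U}^{X,1}_{t_i}+\mathcal{U}^{X,2}_{t_i}\sharp\delta X_{t_it_{i+1}}+\bu^\flat_{t_it_{i+1}}$. Directly from the definitions of $\sharp$ in Section \ref{subsec:tensor}, one checks that on a simple tensor $A\otimes B\otimes C$ both $(\delta X\sharp(A\otimes B\otimes C))\sharp\delta X$ and $((A\otimes B\otimes C)\sharp\delta X)\sharp\delta X$ reduce to the single expression $A\,\delta X\,B\,\delta X\,C$, so both
$$\tfrac12\sum_{(t_i)}(\delta X_{t_it_{i+1}}\sharp\mathcal{U}^{X,1}_{t_i})\sharp\delta X_{t_it_{i+1}}\quad\text{and}\quad\tfrac12\sum_{(t_i)}(\mathcal{U}^{X,2}_{t_i}\sharp\delta X_{t_it_{i+1}})\sharp\delta X_{t_it_{i+1}}$$
fall under the scope of Proposition \ref{prop:second-quant-lim-sum} (applied to the triprocesses $\mathcal{U}^{X,k}\in\cac_1^\ga([s,t];\ca^{\hat\otimes 3})$). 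Their $L^2(\vp)$-limits are therefore $\tfrac12\int_s^t(\id\times\Ga_q\times\id)(\mathcal{U}^{X,k}_u)\,\mathrm{d}u$ for $k=1,2$. For the remainder $\bu^\flat$, the $2\ga$-H\"older regularity of $\bu^\flat$ together with the $1/2$-H\"older regularity of $X$ yields an operator-norm bound
$$\Big\|\sum_{(t_i)}\bu^\flat_{t_it_{i+1}}\sharp\delta X_{t_it_{i+1}}\Big\|\lesssim\|X_1\|\sum_{(t_i)}|t_{i+1}-t_i|^{2\ga+1/2}\lesssim\|X_1\||t-s||\Delta|^{2\ga-1/2},$$
which vanishes as $|\Delta|\to 0$ because $\ga>1/3>1/4$; a fortiori this term tends to zero in $L^2(\vp)$.

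Summing the three contributions gives that the trapezoidal sum converges in $L^2(\vp)$ to $\int_s^t\bu_u\sharp\mathrm{d}X_u+\tfrac12\int_s^t(\id\times\Ga_q\times\id)(\mathcal{U}^{X,1}_u+\mathcal{U}^{X,2}_u)\,\mathrm{d}u$, and by Corollary \ref{prop:transition} this is precisely $\int_s^t\bu_u\sharp\mathrm{d}\mathbb{X}^S_u$. The only non-routine step is the identification of $(\mathcal{U}^{X,2}\sharp\delta X)\sharp\delta X$ with an expression to which Proposition \ref{prop:second-quant-lim-sum} directly applies; this is where one must be careful with the $\sharp$-conventions on triprocesses, but as noted above both bracketings collapse to the same object on simple tensors and hence extend by continuity to arbitrary adapted triprocesses.
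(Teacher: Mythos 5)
Your proof is correct and follows essentially the same route as the paper: split the trapezoidal sum into the left-endpoint It\^o sum plus $\tfrac12\,\delta\bu\sharp\delta X$, expand the latter via the controlled decomposition (\ref{decompo-bipro}), kill the $\bu^\flat$ term by the $|t_{i+1}-t_i|^{2\ga+1/2}$ bound, apply Proposition \ref{prop:second-quant-lim-sum} to the $\mathcal{U}^{X,1}$ and $\mathcal{U}^{X,2}$ sums, and conclude with Corollary \ref{prop:transition}. Your explicit check that both $\sharp$-bracketings collapse to the same expression on simple tensors is a detail the paper leaves implicit, but it is the same argument.
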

\begin{proof}
For any subdivision $\Delta=(t_i)$ of $[s,t]$, write
\begin{eqnarray*}
\lefteqn{\frac{1}{2}\big(\bu_{t_i}+\bu_{t_{i+1}}\big)\sharp \delta X_{t_it_{i+1}}}\\
 &=&\bu_{t_i}\sharp \delta X_{t_it_{i+1}}+\frac{1}{2}\delta\bu_{t_it_{i+1}}\sharp \delta X_{t_it_{i+1}}\\
&=&\bu_{t_i}\sharp \delta X_{t_it_{i+1}}\\
& &+\frac12 \big[(\der X_{t_i t_{i+1}} \sharp \mathcal{U}_{t_i}^{X,1})\sharp \delta X_{t_it_{i+1}} +(\mathcal{U}_{t_i}^{X,2} \sharp \der X_{t_it_{i+1}})\sharp \delta X_{t_it_{i+1}}+ \bu^\flat_{t_it_{i+1}}\sharp \delta X_{t_it_{i+1}}\big]\ ,
\end{eqnarray*}
and observe that, with the notations of Section \ref{subsec:gubi}, we have 
$$\|\bu^\flat_{t_it_{i+1}}\sharp \delta X_{t_it_{i+1}}\| \leq |t_{i+1}-t_i|^{2\ga+1/2}\|X_1\|\, \cn[\bu^\flat;\cac_2^{2\ga}([s,t])]\ .$$
Taking the sum over $i$ and then letting $|\Delta|$ tend to $0$, we get by Proposition \ref{prop:second-quant-lim-sum} that the sum in (\ref{mean-val-sum}) converges in $L^2(\vp)$ to the right-hand side of (\ref{transi-ito-strato}), which leads us to the conclusion.
\end{proof}

\

\subsection{Proof of Proposition \ref{prop:second-quant-lim-sum}}\label{subsec:proof}
When $\mathcal{U}_t=U_t \otimes V_t \otimes W_t$, the convergence property (\ref{converg-sec-quant}) has been shown in the proof of \cite[Theorem 3.2]{donati}. However, since we want the formula to hold for general adapted triprocesses here, we need to exhibit additional controls. Let $\mathcal{U}^n:[s,t]\to \ca^{\otimes 3}$ be a sequence of adapted triprocesses such that $\big\| \mathcal{U}^n_u-\mathcal{U}_u\big\|\to 0$ for every $u\in [s,t]$, and fix a subdivision $\Delta=(t_i)$ of $[s,t]$. Then set successively $Y_i:=\delta X_{t_it_{i+1}}$,
$$
S_{\Delta}(\mathcal{U}):=\sum_{(t_i)\in \Delta}\big\{ ( Y_i\sharp \mathcal{U}_{t_i}) \sharp Y_i - (t_{i+1}-t_i)\big[\id \times \Gamma_q\times \id\big](\mathcal{U}_{t_i})\big\} 
$$
$$
\text{and}\quad S_{\Delta}^n(\mathcal{U}):=\sum_{(t_i)\in \Delta}\big\{ ( Y_i\sharp \mathcal{U}^n_{t_i}) \sharp Y_i- (t_{i+1}-t_i)\big[\id \times \Gamma_q\times \id\big](\mathcal{U}^n_{t_i})\big\} \ .
$$
If $\mathcal{U}^n_t:=\sum_{\ell \leq L^n_t} U^n_{t,\ell} \otimes V^n_{t,\ell} \otimes W^n_{t,\ell} \in \ca_t^{\otimes 3}$, $S_{\Delta}^n(\mathcal{U})$ thus corresponds to 
$$S_{\Delta}^n(\mathcal{U})=\sum_{(t_i)\in \Delta}\sum_{\ell \leq L^n_{t_i}} M^n_{i,\ell} \ ,$$
with
$$M^n_{i,\ell}:=U^n_{t_i,\ell}Y_i V^n_{t_i,\ell} Y_i W^n_{t_i,\ell}-(t_{i+1}-t_i)U^n_{t_i,\ell}\Gamma_q(V^n_{t_i,\ell})W^n_{t_i,\ell} \ ,$$
so
\begin{equation}\label{decomp-nor-so-bis}
\|S_\Delta^n(\mathcal{U})\|_{L^2(\vp)}^2 = \sum_{(t_{i_1})\in \Delta}\sum_{(t_{i_2})\in \Delta}\sum_{\ell_1 \leq L^n_{t_{i_1}}}\sum_{\ell_2 \leq L^n_{t_{i_2}}}\vp\big( M^n_{i_1,\ell_1}(M^n_{i_2,\ell_2})^\ast  \big) \ .
\end{equation}
For more clarity, let us set $U^n_{i,\ell}:=U^n_{t_i,\ell}$, $V^n_{i,\ell}:=V^n_{t_i,\ell}$, $W^n_{i,\ell}:=W^n_{t_i,\ell}$, and consider then the expansion
\begin{eqnarray}
\lefteqn{\vp\big( M^n_{i_1,\ell_1}(M^n_{i_2,\ell_2})^\ast  \big)}\nonumber\\
&=& \vp\big(U^n_{i_1,\ell_1}Y_{i_1} V^n_{i_1,\ell_1}Y_{i_1}W^n_{i_1,\ell_1}W^{n,\ast}_{i_2,\ell_2}Y_{i_2} V^{n,\ast}_{i_2,\ell_2}Y_{i_2}U^{n,\ast}_{i_2,\ell_2}\big)\nonumber\\
& &-(t_{i_2+1}-t_{i_2}) \vp\big(U^n_{i_1,\ell_1}Y_{i_1} V^n_{i_1,\ell_1}Y_{i_1}W^n_{i_1,\ell_1}W^{n,\ast}_{i_2,\ell_2}\Gamma_q(V^{n,\ast}_{i_2,\ell_2})U^{n,\ast}_{i_2,\ell_2}\big)\nonumber\\
& &-(t_{i_1+1}-t_{i_1})\vp\big( U^n_{i_1,\ell_1}\Gamma_q(V^n_{i_1,\ell_1})W^n_{i_1,\ell_1}W^{n,\ast}_{i_2,\ell_2}Y_{i_2} V^{n,\ast}_{i_2,\ell_2}Y_{i_2}U^{n,\ast}_{i_2,\ell_2} \big)\nonumber\\
& &+(t_{i_1+1}-t_{i_1})(t_{i_2+1}-t_{i_2}) \vp\big(U^n_{i_1,\ell_1}\Gamma_q(V^n_{i_1,\ell_1})W^n_{i_1,\ell_1} W^{n,\ast}_{i_2,\ell_2}\Gamma_q(V^{n,\ast}_{i_2,\ell_2})U^{n,\ast}_{i_2,\ell_2} \big) \ .\label{mai-ter-bis}
\end{eqnarray}

\

\noindent
\emph{Step 1: Non-diagonal terms ($i_1\neq i_2$).} Observe first that if for instance $i_1 < i_2$, we have, by combining Lemma \ref{lem:second-quanti} and Definition \ref{def:second-quanti}, 
\begin{eqnarray*}
\lefteqn{\vp\big(U^n_{i_1,\ell_1}Y_{i_1} V^n_{i_1,\ell_1}Y_{i_1}W^n_{i_1,\ell_1}W^{n,\ast}_{i_2,\ell_2}Y_{i_2} V^{n,\ast}_{i_2,\ell_2}Y_{i_2}U^{n,\ast}_{i_2,\ell_2}\big)}\\
&=&\vp\big(U^n_{i_1,\ell_1}Y_{i_1} V^n_{i_1,\ell_1}Y_{i_1}W^n_{i_1,\ell_1}W^{n,\ast}_{i_2,\ell_2}\vp\big( Y_{i_2} V^{n,\ast}_{i_2,\ell_2}Y_{i_2}\big| \cb_{t_{i_2}}\big) U^{n,\ast}_{i_2,\ell_2}\big)\\
&=&(t_{i_2+1}-t_{i_2}) \vp\big(U^n_{i_1,\ell_1}Y_{i_1} V^n_{i_1,\ell_1}Y_{i_1}W^n_{i_1,\ell_1}W^{n,\ast}_{i_2,\ell_2}\Gamma_q(V^{n,\ast}_{i_2,\ell_2}) U^{n,\ast}_{i_2,\ell_2}\big) \ ,
\end{eqnarray*}
and with the same conditioning argument
\begin{align*}
&\vp\big( U^n_{i_1,\ell_1}\Gamma_q(V^n_{i_1,\ell_1})W^n_{i_1,\ell_1}W^{n,\ast}_{i_2,\ell_2}Y_{i_2} V^{n,\ast}_{i_2,\ell_2}Y_{i_2}U^{n,\ast}_{i_2,\ell_2} \big) \\
&=(t_{i_2+1}-t_{i_2}) \vp\big(U^n_{i_1,\ell_1}\Gamma_q(V^n_{i_1,\ell_1})W^n_{i_1,\ell_1} W^{n,\ast}_{i_2,\ell_2}\Gamma_q(V^{n,\ast}_{i_2,\ell_2})U^{n,\ast}_{i_2,\ell_2} \big) \ ,
\end{align*}
so that, going back to (\ref{mai-ter-bis}), one has $\vp\big( M^n_{i_1,\ell_1}(M^n_{i_2,\ell_2})^\ast  \big)=0$. Similar arguments lead to the same conclusion when $i_2<i_1$.

\smallskip

\noindent
\emph{Step 2: Diagonal terms ($i_1=i_2=i$).} First, observe that with the same conditioning argument as above, decomposition (\ref{mai-ter-bis}) actually reduces to
\begin{eqnarray*}
\vp\big( M^n_{i,\ell_1}(M^n_{i,\ell_2})^\ast  \big)&=&\vp\big(U^n_{i,\ell_1}Y_{i} V^n_{i,\ell_1}Y_{i}W^n_{i,\ell_1}W^{n,\ast}_{i,\ell_2}Y_{i} V^{n,\ast}_{i,\ell_2}Y_{i}U^{n,\ast}_{i,\ell_2}\big)\\
& &-(t_{i+1}-t_{i})^2 \vp\big(U^n_{i,\ell_1}\Gamma_q(V^n_{i,\ell_1})W^n_{i,\ell_1} W^{n,\ast}_{i,\ell_2}\Gamma_q(V^{n,\ast}_{i,\ell_2})U^{n,\ast}_{i,\ell_2} \big) \ .
\end{eqnarray*} 
Now, on the one hand, using (\ref{q-bm-holder}) and the Cauchy-Schwarz inequality,
\begin{eqnarray*}
\lefteqn{\big|\vp\big(U^n_{i,\ell_1}Y_{i} V^n_{i,\ell_1}Y_{i}W^n_{i,\ell_1}W^{n,\ast}_{i,\ell_2}Y_{i} V^{n,\ast}_{i,\ell_2}Y_{i}U^{n,\ast}_{i,\ell_2}\big)\big|}\\
 &\leq& \|Y_i\|^4 \| U^n_{i,\ell_1}\| \| V^n_{i,\ell_1}\| \|W^n_{i,\ell_1}\| \| W^{n}_{i,\ell_2}\| \|V^{n}_{i,\ell_2}\| \|U^{n}_{i,\ell_2}\|\\
&\leq& (t_{i+1}-t_i)^2\|X_1\|^4 \| U^n_{i,\ell_1}\| \| V^n_{i,\ell_1}\| \|W^n_{i,\ell_1}\| \| W^{n}_{i,\ell_2}\| \|V^{n}_{i,\ell_2}\| \|U^{n}_{i,\ell_2}\| \ .
\end{eqnarray*}
On the other hand, using the definition of $\Gamma_q(V_{j_1})$,
\begin{eqnarray*}
\lefteqn{\big| \vp\big(U^n_{i,\ell_1}\Gamma_q(V^n_{i,\ell_1})W^n_{i,\ell_1} W^{n,\ast}_{i,\ell_2}\Gamma_q(V^{n,\ast}_{i,\ell_2})U^{n,\ast}_{i,\ell_2} \big) \big|}\\
 &=&\big| \vp\big(U^n_{i,\ell_1}(\delta X)_{t_i, t_i+1}V^n_{i,\ell_1}(\delta X)_{t_i,t_i+1}W^n_{i,\ell_1} W^{n,\ast}_{i,\ell_2}\Gamma_q(V^{n,\ast}_{i,\ell_2})U^{n,\ast}_{i,\ell_2} \big) \big|\\
&\leq& \big\|U^{n,\ast}_{i,\ell_2} U^n_{i,\ell_1}(\delta X)_{t_i, t_i+1}V^n_{i,\ell_1}(\delta X)_{t_i,t_i+1}W^n_{i,\ell_1} W^{n,\ast}_{i,\ell_2} \big\|_{L^2(\vp)} \big\| \Gamma_q(V^{n,\ast}_{i,\ell_2}) \big\|_{L^2(\vp)} \ ,
\end{eqnarray*}
which, by (\ref{contrac-property}), entails that
\begin{eqnarray*}
\lefteqn{\big| \vp\big(U^n_{i,\ell_1}\Gamma_q(V^n_{i,\ell_1})W^n_{i,\ell_1} W^{n,\ast}_{i,\ell_2}\Gamma_q(V^{n,\ast}_{i,\ell_2})U^{n,\ast}_{i,\ell_2} \big) \big|}\\
&\leq & \|X_1\|^4 \| U^n_{i,\ell_1}\| \| V^n_{i,\ell_1}\| \|W^n_{i,\ell_1}\| \| W^{n}_{i,\ell_2}\| \|V^{n}_{i,\ell_2}\| \|U^{n}_{i,\ell_2}\| \ .
\end{eqnarray*}

\

\noindent
Going back to (\ref{decomp-nor-so-bis}), we have thus shown that 
$$
\|S_\Delta^n(\mathcal{U})\|_{L^2(\vp)}^2\leq \|X_1\|^4 \sum_{(t_{i})\in \Delta}(t_{i+1}-t_i)^2\Big(\sum_{\ell \leq L^n_{t_{i}}}\| U^n_{i,\ell}\| \| V^n_{i,\ell}\| \|W^n_{i,\ell}\|\Big)^2 \ ,\\
$$
and so we can assert that
\begin{eqnarray*}
\|S_\Delta^n(\mathcal{U})\|_{L^2(\vp)}^2&\leq& \|X_1\|^4 \sum_{(t_{i})\in \Delta}(t_{i+1}-t_i)^2 \| \mathcal{U}^n_{t_i}\|^2 \\
&\leq&2 \|X_1\|^4\bigg\{ \sum_{(t_{i})\in \Delta}(t_{i+1}-t_i)^2 \| \mathcal{U}^n_{t_i}-\mathcal{U}_{t_i}\|^2+\big(\sup_{u\in [s,t]}\| \mathcal{U}_{u}\|^2\big) |t-s| |\Delta| \bigg\} \ .
\end{eqnarray*}
By letting $n$ tend to infinity first, we can conclude that $\|S_{\Delta}(\mathcal{U})\|_{L^2(\vp)}^2 \to \ 0$ as the mesh $|\Delta|$ tends to $0$. The convergence
$$\sum_{(t_i)\in \Delta}(t_{i+1}-t_i)\big[\id \times \Gamma_q\times \id\big](\mathcal{U}_{t_i}) \to \int_s^t \big[\id \times \Gamma_q\times \id\big](\mathcal{U}_{u}) \, \mathrm{d}u \quad \text{in} \ L^2(\vp)$$
follows easily from the regularity of $\mathcal{U}$, by noting that for every $u$ and every $\mathcal{V}\in \ca_u^{\hat{\otimes}3}$, 
$$\big\| \big[\id \times \Gamma_q\times \id\big](\mathcal{V}) \big\|_{L^2(\vp)} \leq \| \mathcal{V}\| \ .$$
 This achieves the proof of our statement.

\bigskip

\end{document}